\newcommand{\Q}{\mathbb{Q}}
\newcommand{\F}{\mathbb{F}}
\newcommand{\Z}{\mathbb{Z}}
\newcommand{\N}{\mathbb{N}}
\newcommand{\Lmac}{\cL_{\rm Mac}}
\newcommand{\Lring}{\cL_{\rm ring}}
\def\11{{\mathbf 1}}
\def\AA{{\mathbf A}}
\def\FF{{\mathbb F}}
\def\PP{{\mathbf P}}
\def\QQ{{\mathbb Q}}
\def\ZZ{{\mathbb Z}}
\def\cL{{\mathcal L}}
\def\cM{{\mathcal M}}
\def\cO{{\mathcal O}}
\mathchardef\alphag="7C0B
\mathchardef\betag="7C0C
\mathchardef\gammag="7C0D
\mathchardef\deltag="7C0E
\mathchardef\varepsilong="7C22
\mathchardef\varphig="7C27
\mathchardef\psig="7C20
\mathchardef\zetag="7C10
\mathchardef\epsilong="7C0F
\mathchardef\rhog="7C1A
\mathchardef\taug="7C1C
\mathchardef\upsilong="7C1D
\mathchardef\iotag="7C13
\mathchardef\thetag="7C12
\mathchardef\pig="7C19
\mathchardef\sigmag="7C1B
\mathchardef\etag="7C11
\mathchardef\omegag="7C21
\mathchardef\kappag="7C14
\mathchardef\lambdag="7C15
\mathchardef\mug="7C16
\mathchardef\xig="7C18
\mathchardef\chig="7C1F
\mathchardef\nug="7C17
\mathchardef\varthetag="7C23
\mathchardef\varpig="7C24
\mathchardef\varrhog="7C25
\mathchardef\varsigmag="7C26
\mathchardef\Omegag="7C0A
\mathchardef\Thetag="7C02
\mathchardef\Sigmag="7C06
\mathchardef\Deltag="7C01
\mathchardef\Phig="7C08
\mathchardef\Gammag="7C00
\mathchardef\Psig="7C09
\mathchardef\Lambdag="7C03
\mathchardef\Xig="7C04
\mathchardef\Pig="7C05
\mathchardef\Upsilong="7C07
\newtheorem{thm}{Theorem}
\newtheorem{lem}{Lemma}
\newtheorem{cor}[thm]{Corollary}
\newtheorem{prop}[lem]{Proposition}
\newtheorem{claim}{Claim}
\newtheorem*{cor*}{Corollary}
\theoremstyle{remark}
\theoremstyle{plain}
\numberwithin{equation}{subsection}
\def\boxit#1#2{\setbox1=\hbox{\kern#1{#2}\kern#1}%
\dimen1=\ht1 \advance\dimen1 by #1
\dimen2=\dp1 \advance\dimen2 by #1
\setbox1=\hbox{\vrule height\dimen1 depth\dimen2\box1\vrule}%
\setbox1=\vbox{\hrule\box1\hrule}%
\advance\dimen1 by .4pt \ht1=\dimen1
\advance\dimen2 by .4pt \dp1=\dimen2 \box1\relax}
\newcommand{\ord}{\operatorname{ord}}
\begin{document}

\setcounter{tocdepth}{2} 

\title[Defining valuation rings in Henselian valued fields]{Uniformly defining  valuation rings in Henselian valued fields with finite or pseudo-finite residue fields} 


\author[R. Cluckers]{Raf Cluckers}
\address{Universit\'e Lille 1, Laboratoire Painlev\'e, CNRS - UMR 8524, Cit\'e Scientifique, 59655
Villeneuve d'Ascq Cedex, France, and,
Katholieke Universiteit Leuven, Department of Mathematics,
Celestijnenlaan 200B, B-3001 Leu\-ven, Bel\-gium\\}
\email{Raf.Cluckers@math.univ-lille1.fr}
\urladdr{http://math.univ-lille1.fr/$\sim$cluckers}

\author[J. Derakhshan]{Jamshid Derakhshan}
\address{University of Oxford, Mathematical Institute,
24-29 St Giles', Oxford OX1 3LB, UK}
\email{derakhsh@maths.ox.ac.uk}

\author[E. Leenknegt]{Eva Leenknegt}
\address{Purdue University, Department of Mathematics,
150 N. University Street,
West Lafayette, IN 47907-2067, US}
\email{eleenkne@math.purdue.edu}

%

\author[A. Macintyre]{Angus Macintyre}
\address{Queen Mary, University of London,
School of Mathematical Sciences, Queen Mary, University of London, Mile End Road, London E1 4NS, UK}
\email{angus@eecs.qmul.ac.uk}

\subjclass[2000]{Primary 11D88, 11U09; Secondary 11U05}
\keywords{Definability, Diophantine sets, Hilbert's Tenth Problem}

\begin{abstract}
We give a definition, in the ring language, of $\ZZ_p$ inside $\QQ_p$ and of $\FF_p[[t]]$ inside $\FF_p((t))$, which works 
uniformly for all $p$ and all finite field extensions of these fields, and in many other Henselian valued fields as well.
The formula can be taken existential-universal in the ring language, and in fact existential in a modification of the 
language of Macintyre. 
Furthermore, we show the negative result that in the language of rings there does not exist a uniform definition by an 
existential formula and neither by a universal formula for the valuation rings of all the finite extensions of 
a given Henselian valued field. We also show that there is no existential 
formula of the ring language defining 
$\Z_p$ inside $\Q_p$ uniformly for all $p$. For any fixed finite extension of $\Q_p$, we give an existential formula and a universal 
formula in the ring language which define the valuation ring.
\end{abstract}

\maketitle

\section{Introduction}

Uniform definitions of valuation rings inside families of 
Henselian valued fields have played important roles in the work related to 
Hilbert's 10th problem by B.~Poonen \cite{Poonen} and by J.~Koenigsmann \cite{Koeni}, especially uniformly in 
$p$-adic fields. We address this issue in a wider setting, using the ring language and Macintyre's language. 
Since the work \cite{Mac}, the Macintyre language has always been prominent in the study of $p$-adic fields.

Let $\cL_{\rm ring}$ be the ring language $(+,-,\cdot,0,1)$. 
Write $\Lmac$ for the language of Macintyre, which is obtained from $\cL_{\rm ring}$ by 
adding for each integer $n>0$ a predicate $P_n$ for the set of nonzero $n$-th powers. 
We assume that the reader is familiar with pseudo-finite fields and Henselian valued fields. 
For more information we refer to \cite{EngPres}, \cite{Marker}, \cite{CDM}, and \cite{NotesZoe}.

The following notational conventions are followed in this paper. 
For a Henselian valued field $K$ we will write $\cO_K$ for its valuation ring. $\cO_K$ is assumed nontrivial. 
$\cM_K$ is the maximal ideal of $\cO_K$, and $k=\cO_K/\cM_K$ is the residue field. We denote by $res$ the natural map 
$\cO_K\to k$, and by $\ord$ the valuation.

Given a ring $R$ and a formula $\varphi$ in $\cL_{\rm ring}$ or $\Lmac$ in $m\geq 0$ free variables, 
we write $\varphi(R)$ for the subset of $R^m$ consisting of the elements that satisfy $\varphi$. In this paper we 
will always work without parameters, that is, with $\emptyset$-definability.

\begin{thm}\label{main} 
There is an existential formula $\varphi(x)$ in $\cL_{\rm{ring}}\cup \{P_2,P_3\}$ such that 
$$
\cO_K = \varphi(K)
$$
holds for any Henselian
valued field $K$ with finite or pseudo-finite residue field $k$ 
provided that $k$ contains non-cubes in case its characteristic is 2.


\end{thm}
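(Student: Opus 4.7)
The plan is to exhibit an explicit existential formula
$$\varphi(x) \;:=\; \exists \bar{y}\;\Psi(x,\bar{y})$$
in $\cL_{\rm ring}\cup\{P_2,P_3\}$, where $\Psi$ is a quantifier-free Boolean combination of $P_2$- and $P_3$-predicates applied to polynomials in $x$ and the auxiliary tuple $\bar{y}$, and then to verify $\varphi(K)=\cO_K$ uniformly over the class of fields described. The polynomials inside the $P_n$-clauses must be engineered so that $P_n$-membership becomes a valuation-theoretic sieve: one branch tuned to $\operatorname{char}(k)\neq 2$ and the other to $\operatorname{char}(k)\neq 3$, combined so that some branch is always "active".

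The central tool is Hensel's lemma. If $n\geq 1$ is coprime to $\operatorname{char}(k)$, then the polynomial $T^n-(1+m)$ has a simple residue root at $1$ for every $m\in\cM_K$, so $1+\cM_K\subseteq P_n(K)$. Since $2$ and $3$ cannot both divide $\operatorname{char}(k)$, at least one of $P_2(K)$, $P_3(K)$ always contains the principal units. This yields the "positive" half of the argument: whenever $v(x)\geq 0$, one can choose the witnesses $\bar{y}$ so that the polynomials appearing inside $P_n$ evaluate to principal units, forcing $\varphi(x)$ to hold for a suitable $n\in\{2,3\}$.

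The "negative" half, namely excluding $v(x)<0$ from $\varphi(K)$, is the uniform incarnation of the classical trick $\ZZ_p=\{x:\exists y,\ y^2=1+px^2\}$ (for odd $p$), which works because $v(1+px^2)=1+2v(x)$ is odd when $v(x)<0$ and hence rules out being a square. Transplanted to the uniform setting, the uniformizer $p$ must be replaced by a polynomial combination of $\bar{y}$ producing an element $w\in\cM_K$ of valuation not divisible by the relevant exponent $n\in\{2,3\}$; the $P_n$-clause applied to $1+wx^2$ (or its cubic analogue) then separates $v(x)\geq 0$ from $v(x)<0$ by divisibility of the valuation. The finite/pseudo-finite hypothesis on $k$ is crucial here, since it guarantees that $k^\times/(k^\times)^n$ has enough structure both to supply such witnesses and to detect them through $P_n$.

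The main obstacle is precisely this uniform witness construction. One cannot directly refer to a uniformizer, and the residue characteristic varies across the class, so the formula must produce through its existential quantifiers alone a suitable $w\in\cM_K$ with $v(w)$ not divisible by $2$ or $3$, in every case. The role of the non-cube hypothesis when $\operatorname{char}(k)=2$ is exactly this: in that case the $P_2$-branch is unavailable, and one needs $k^\times/(k^\times)^3$ to be non-trivial so that $P_3$ yields a genuine parity obstruction on valuations. Once the witness construction is carried out uniformly for the three cases $\operatorname{char}(k)\in\{0\}\cup\{p\geq 5\}$, $\operatorname{char}(k)=3$, and $\operatorname{char}(k)=2$, assembling them into a single existential formula via disjunction and simplification completes the proof.
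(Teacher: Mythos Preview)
Your proposal is not a proof: it is a strategy that explicitly leaves the decisive step undone. You yourself identify ``the uniform witness construction'' as ``the main obstacle'' and then simply assert that ``once the witness construction is carried out uniformly \dots\ assembling them \dots\ completes the proof''. Nothing in the proposal carries it out. Worse, the specific mechanism you commit to---producing $w\in\cM_K$ with $v(w)\notin n\Gamma$ and then using $P_n(1+wx^2)$ as a parity sieve---cannot work over the full class in the theorem. The statement places no restriction on the value group $\Gamma$; for instance the Hahn series field $\FF_p((t^{\QQ}))$ is Henselian with finite residue field and $\Gamma=\QQ$, so $n\Gamma=\Gamma$ and no element has valuation ``not divisible by $n$''. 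There is also a circularity lurking in your plan: you need your existential witnesses to force $w\in\cM_K$, but $\cM_K$ is exactly what you are trying to define.

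The paper avoids all of this by a different idea. It does not try to manufacture a uniformizer-like witness at all. Instead it writes down, for $p\in\{2,3\}$, the condition $T_p(x):\ (p^p+x\in P_p)\wedge(x\notin P_p)$, and sets $T:=T_2\vee T_3$. A Hensel argument on the ratio $(p^p+x)/x$ shows $T(K)\subset\cO_K$ for \emph{every} Henselian $K$, with no hypothesis on $\Gamma$; and $T$ restricts to an infinite definable subset of the residue field $k$ under the stated hypotheses (this is where the ``non-cubes when $\operatorname{char}k=2$'' clause enters). The paper then invokes the Chatzidakis--van den Dries--Macintyre theorem that any infinite definable subset $S$ of a pseudo-finite field satisfies $k=\{a+b+cd:a,b,c,d\in S\}$, lifts this through $\res$ to get $\cO_K=\{a+b+cd:a,b,c,d\in T(K)\}$ for large residue fields, and patches in the finitely many small residue fields by a separate elementary construction. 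None of these ingredients---the self-contained formula $T$, the CDM additive-combinatorics input, or the small-field patch---appears in your outline.
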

We are very grateful to an anonymous referee for pointing out to us that our argument in an earlier version 
failed when $k$ has characteristic $2$ and every element is a cube (i.e. $(k^*)^3=k^*$). There are such 
$k$, finite ones and pseudo-finite ones (cf. Section \ref{sec-app}).

Note that in such a case $k$ has no primitive cube root of unity, and so its unique quadratic extension is 
cyclotomic. That extension is the Artin-Schreier extension, and (as the referee suggested) it is appropriate to adjust the 
Macintyre language by replacing $P_2$ by $P_2^{AS}$, where 
$$
P_2^{AS}(x)\Leftrightarrow \exists y (x=y^2+y).
$$
This has notable advantages, namely:
\begin{thm}\label{main2} 
There is an existential formula $\varphi(x)$ in $\cL_{\rm{ring}}\cup \{P_2^{AS}\}$ such that 
$$
\cO_K = \varphi(K)
$$
holds for all Henselian
valued fields $K$ with finite or pseudo-finite residue field.
\end{thm}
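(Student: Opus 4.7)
The plan is to adapt the existential formula of Theorem~\ref{main}, replacing the pair $\{P_2, P_3\}$ by the single Artin-Schreier predicate $P_2^{AS}$, so as to simultaneously eliminate the cube predicate and the non-cube hypothesis on $k$. The starting observation is that $P_2^{AS}$ existentially subsumes $P_2$ away from characteristic $2$: completing the square yields the identity
$$
y^2 + y = a \iff (2y+1)^2 = 1 + 4a,
$$
so in any field of characteristic $\neq 2$ one has $P_2(b) \iff b\neq 0 \wedge P_2^{AS}((b-1)/4)$, and, conversely, $P_2^{AS}(a) \iff (1+4a = 0 \vee P_2(1+4a))$. Consequently, over any Henselian valued field whose residue field has characteristic $\neq 2$, the formula of Theorem~\ref{main} translates directly into an existential formula of $\cL_{\rm{ring}}\cup\{P_2^{AS}\}$, and the $P_3$ predicate can be dropped since its role in Theorem~\ref{main} is confined to the residue characteristic $2$ case.

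The substantive new work lies in residue characteristic $2$. The decisive feature of $P_2^{AS}$ here is that $Y^2+Y-a$ has derivative $1$ in characteristic $2$, so Hensel's lemma applies without any side condition: for $a\in\cO_K$ one has $P_2^{AS}(a) \iff \res(a)\in\wp(k)$, where $\wp(y):=y^2+y$ denotes the Artin-Schreier map on $k$. Since $k$ is finite or pseudo-finite of characteristic $2$, $\wp(k)$ is a subgroup of $(k,+)$ of index exactly $2$, and this holds with \emph{no} further assumption on $k$; this is precisely the feature that the non-cube hypothesis secured in Theorem~\ref{main}. I would then produce an existential formula of the shape
$$
\varphi(x):\ \exists\bar y\, \big(\, P_2^{AS}(t(x,\bar y)) \wedge \sigma(x,\bar y)\,\big),
$$
with $\sigma$ a conjunction of $\cL_{\rm{ring}}$-equations, designed so that for $x\in\cO_K$ one can find $\bar y\in\cO_K$ with $\res(t(x,\bar y))\in\wp(k)$---which Hensel lifts to an Artin-Schreier root---while for $x\notin\cO_K$, i.e.\ $\ord(x)<0$, the side conditions $\sigma$ force $t(x,\bar y)$ to have odd negative valuation, contradicting membership in the Artin-Schreier image via the identity $\ord(y^2+y)=2\ord(y)$ valid whenever $\ord(y)<0$.

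The two cases are combined into a single existential formula either by disjunction (existential formulas being closed under $\vee$) or, more elegantly, by a term-level merger exploiting the good behaviour of $P_2^{AS}$ in both residue characteristics. The main obstacle is the residue characteristic $2$ construction itself: designing the term $t(x,\bar y)$ and side conditions $\sigma$ so that the parity obstruction triggers uniformly across every Henselian $K$ with residue characteristic $2$, especially in mixed characteristic, where $\ord(2)>0$ makes the low-order behaviour delicate. Once this is done, uniformity over residue field size and field degree follows automatically from the existential nature of the formula together with the model-theoretic tameness of finite and pseudo-finite fields.
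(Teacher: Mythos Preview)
Your proposal has a genuine gap: the residue characteristic~$2$ case is never actually carried out. You describe the desired shape $\exists\bar y\,(P_2^{AS}(t(x,\bar y))\wedge\sigma(x,\bar y))$ and the intended mechanism (for $x\notin\cO_K$, force $t(x,\bar y)$ to have \emph{odd} negative valuation), but you do not construct $t$ or $\sigma$, and you yourself flag this as ``the main obstacle''. The mechanism is not obviously implementable: $\sigma$ is required to be a conjunction of ring equations, and it is unclear how equations alone can force a parity condition on $\ord(t)$ uniformly over all negative $\ord(x)$ (even versus odd), all value groups (discrete or dense), and both equal and mixed characteristic with residue characteristic~$2$. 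Nothing in the sketch indicates how to do this. There is also a secondary issue with the case split: for the disjunction $\varphi_1\vee\varphi_2$ to define $\cO_K$ uniformly, each disjunct must satisfy $\varphi_i(K)\subset\cO_K$ for \emph{all} $K$, not just those of the intended residue characteristic; the translated $T_2$-formula involves terms like ``$\exists a\,(4a+1=b\wedge\ldots)$'', whose behaviour in characteristic~$2$ you do not analyse.

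The paper avoids the case split entirely with a single, characteristic-free idea you do not mention: set $T^+(x):=\,x\neq 0\wedge\neg P_2^{AS}(x)\wedge\neg P_2^{AS}(x^{-1})$. The point is that for any Henselian $K$ and any $x$ with $\ord(x)>0$, Hensel's lemma applied to $y^2+y-x$ (derivative $2y+1$, a unit at $y=0$) gives $x\in P_2^{AS}(K)$; hence $\neg P_2^{AS}(x)\wedge\neg P_2^{AS}(x^{-1})$ forces $\ord(x)=0$, so $T^+(K)\subset\cO_K^\times$ in every residue characteristic. One then checks, via an absolutely irreducible curve and PAC, that $T^+(k)$ is infinite for every pseudo-finite $k$, and proceeds exactly as in the proof of Theorem~\ref{main} (the Chatzidakis--van den Dries--Macintyre step plus the small-residue-field argument). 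Note that this relies on \emph{negated} $P_2^{AS}$-literals, which are still quantifier-free in $\cL_{\rm ring}\cup\{P_2^{AS}\}$; your char~$2$ sketch restricts to positive occurrences, which is unnecessarily constraining. Finally, the completing-the-square identity you begin with is indeed what the paper uses, but in the opposite direction: it derives Theorem~\ref{main'} \emph{from} Theorem~\ref{main2}, not the other way around.
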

Since in a field of characteristic not equal to $2$, we have $P_2^{AS}(x)\Leftrightarrow P_2(1+4x)$, 
Theorem \ref{main2} implies the following.
\begin{thm}\label{main'} 
There is an existential formula $\varphi(x)$ in $\cL_{\rm{ring}}\cup \{P_2\}$ such that 
$$
\cO_K = \varphi(K)
$$
holds for all Henselian
valued fields $K$ with finite or pseudo-finite residue field of characteristic not equal to $2$.
\end{thm}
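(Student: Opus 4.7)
The plan is to deduce Theorem~\ref{main'} directly from Theorem~\ref{main2} by expressing the Artin--Schreier predicate $P_2^{AS}$ in terms of $P_2$, exactly as hinted in the sentence preceding the statement. The key identity is
\[
1 + 4(y^2 + y) = (2y+1)^2,
\]
valid in any commutative ring. In a field $K$ with $\operatorname{char}(K)\neq 2$, the affine map $y\mapsto 2y+1$ is a bijection on $K$, so
\[
P_2^{AS}(x) \iff \exists y\,(x=y^2+y) \iff \exists z\,(1+4x = z^2) \iff P_2(1+4x) \vee (1+4x=0).
\]
Because the residue field $k$ has characteristic $\neq 2$, the characteristic of $K$ is either $0$ or that of $k$, hence in particular $\neq 2$, so the equivalence applies uniformly in every $K$ under consideration.

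Let $\psi^{AS}(x)$ be the existential $\cL_{\rm{ring}}\cup\{P_2^{AS}\}$-formula furnished by Theorem~\ref{main2}, written in prenex form $\exists \bar u\, \chi(\bar u, x)$ with $\chi$ quantifier-free. I build $\psi(x) \in \cL_{\rm{ring}}\cup\{P_2\}$ by replacing, in $\chi$, every atomic subformula $P_2^{AS}(t(\bar u,x))$ by the quantifier-free disjunction $P_2(1+4\,t(\bar u,x)) \vee (1+4\,t(\bar u,x)=0)$, and leaving every other atom (or negated atom) untouched. The result $\chi'$ is again quantifier-free in $\cL_{\rm{ring}}\cup\{P_2\}$, so $\psi(x):=\exists \bar u\,\chi'(\bar u,x)$ is existential. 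By the equivalence above, $\psi^{AS}$ and $\psi$ define the same subset of $K$ for every Henselian valued field $K$ whose residue field has characteristic $\neq 2$. Applying Theorem~\ref{main2} gives $\psi^{AS}(K)=\cO_K$, hence $\psi(K)=\cO_K$, which is the claim.

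No serious obstacle arises here; the entire content is the displayed identity plus care for the boundary case $1+4x=0$ (which would otherwise be lost when passing from ``square'' to ``nonzero square''), and is handled by the extra disjunct. All the real work has already been done in the proof of Theorem~\ref{main2}; Theorem~\ref{main'} is a cost-free translation between predicates available in characteristic $\neq 2$.
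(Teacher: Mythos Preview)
Your proof is correct and follows exactly the same route as the paper: deduce Theorem~\ref{main'} from Theorem~\ref{main2} via the identity $1+4(y^2+y)=(2y+1)^2$, which in characteristic $\neq 2$ turns every occurrence of $P_2^{AS}$ into a quantifier-free condition using $P_2$. Your treatment is in fact slightly more careful than the paper's one-line remark, since you correctly add the disjunct $1+4t=0$ to account for the fact that $P_2$ is defined as the set of \emph{nonzero} squares; the paper's stated equivalence $P_2^{AS}(x)\Leftrightarrow P_2(1+4x)$ misses exactly that single point $x=-1/4$, though this has no effect on the final result.
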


Before proving the above theorems, we state some other results. First some negative results.
\begin{thm}\label{main3} Let $K$ be any Henselian valued field. There does not exist an existential formula
$\psi(x)$ in $\Lring$ such that
$$\mathcal{O}_L=\psi(L)$$
for all finite extensions $L$ of $K$. Neither does there exist a universal formula $\eta(x)$ in $\Lring$ such that
$$\mathcal{O}_L=\eta(L)$$
for all finite extensions $L$ of $K$.
\end{thm}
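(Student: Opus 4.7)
The plan is to pass to the algebraic closure $\bar K$ and exploit quantifier elimination for algebraically closed fields. Since $K$ is Henselian, the valuation on $K$ extends uniquely to each finite extension $L/K$, and these extensions glue to a unique valuation on $\bar K$ whose valuation ring satisfies
\[
\cO_{\bar K} \;=\; \bigcup_{L/K \text{ finite}} \cO_L.
\]
The value group of $\bar K$ is the divisible hull of the (nontrivial) value group of $K$, so both $\cO_{\bar K}$ and $\bar K\setminus\cO_{\bar K}$ are infinite.

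Suppose first that $\psi(x)$ is existential in $\Lring$ with $\psi(L)=\cO_L$ for every finite extension $L/K$. An existential formula is preserved under directed unions of structures: a witness for $\psi(a)$ in $\bar K$ is a finite tuple and therefore lies in some finite extension $L$, so
\[
\psi(\bar K) \;=\; \bigcup_{L/K \text{ finite}} \psi(L) \;=\; \bigcup_{L/K \text{ finite}} \cO_L \;=\; \cO_{\bar K}.
\]
By quantifier elimination for algebraically closed fields, $\psi(\bar K)$ is Zariski-constructible in $\bar K = \AA^1(\bar K)$. But the constructible subsets of the affine line are precisely the finite and cofinite subsets, whereas $\cO_{\bar K}$ is neither. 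This is the desired contradiction.

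The universal case reduces to the existential one: if $\eta(x)$ is universal with $\eta(L)=\cO_L$ for every finite $L/K$, then $\neg\eta$ is existential with $\neg\eta(L) = L\setminus\cO_L$, and exactly the same computation gives $\neg\eta(\bar K) = \bar K \setminus \cO_{\bar K}$, which is again constructible in $\AA^1(\bar K)$ but neither finite nor cofinite. There is no serious obstacle in this proof; the crucial move is the passage to $\bar K$, after which the preservation of existential formulas under directed unions and the trivial geometry of the affine line complete the argument.
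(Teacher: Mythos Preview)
Your proof is correct and follows essentially the same route as the paper: pass to $\bar K = K^{\mathrm{alg}}$, use that existential formulas are preserved under the directed union of finite subextensions, and invoke the finite/cofinite dichotomy for definable subsets of the affine line over an algebraically closed field. Your treatment of the universal case is in fact slightly cleaner than the paper's, which detours through a separate argument ruling out a uniform existential definition of the maximal ideals $\cM_L$ before arriving at the same conclusion.
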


The following was noticed by the referee.
\begin{thm}\label{main4}
There is no existential or universal $\Lring$-formula $\varphi(x)$ such that $\Z_p = \varphi(\Q_p)$ for all the primes $p$. More 
generally, given any $N>0$, 
there is no such formula $\varphi(x)$ such that $\Z_p = \varphi(\Q_p)$ for all $p\geq N$.
\end{thm}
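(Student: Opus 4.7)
The plan is to reduce the uniformity question to showing non-definability in a single ultraproduct via \L{}o\'s's theorem, and then to use existential/universal preservation combined with Theorem \ref{main} and Ax--Kochen--Ershov to derive a contradiction. I describe the existential case in detail; the universal case is handled dually.

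First I would assume, for contradiction, that there is an existential $\Lring$-formula $\varphi(x)$ with $\varphi(\Q_p)=\Z_p$ for all primes $p\geq N$. Fix a non-principal ultrafilter $\cU$ on the set of such primes and form the ultraproduct $K=\prod_p \Q_p/\cU$. By \L{}o\'s's theorem, $\varphi(K)=\prod_p\Z_p/\cU=\cO_K$, where $\cO_K$ is the valuation ring of the natural ultraproduct valuation on $K$; the field $K$ is Henselian of characteristic $(0,0)$, with pseudo-finite residue field $\bar F=\prod_p\FF_p/\cU$ and value group an ordered $\Z$-group. Comparing $\varphi$ with the existential formula $\varphi_0\in\Lring\cup\{P_2,P_3\}$ from Theorem \ref{main}, the sentence $\forall x\bigl(\varphi(x)\leftrightarrow\varphi_0(x)\bigr)$ holds in every $\Q_p$ with $p\geq N$ (invoking Theorems \ref{main2}/\ref{main'} to handle the characteristic-$2$ caveat), hence holds in $K$ by \L{}o\'s, and thereby holds in every Henselian valued field of characteristic $(0,0)$ that is $\Lring$-elementarily equivalent to $K$, by Ax--Kochen--Ershov. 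In each such $L$ the formula $\varphi$ therefore defines $\cO_L$.

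The key step is to produce a field $L$ in this elementary class together with an $\Lring$-embedding $\iota:K\hookrightarrow L$ and an element $a\in\cO_K$ with $\iota(a)\notin\cO_L$. Given such data, existential upward preservation forces $L\models\varphi(\iota(a))$, so $\iota(a)\in\varphi(L)=\cO_L$, a contradiction. For the universal case, given a universal $\eta$, one instead seeks $\iota:L\hookrightarrow K$ and $b\in L\setminus\cO_L$ with $\iota(b)\in\cO_K$ and applies universal downward preservation.

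The main obstacle, which is the crux of the referee's observation, is constructing the non-elementary embedding $\iota$: every valued-field embedding extending the Henselian valuation of the source respects the valuation ring, so $\iota$ must be chosen as a purely $\Lring$-theoretic embedding, and the target $L$ must be equipped with an independent Henselian valuation of the correct Ax--Kochen--Ershov type whose restriction to $\iota(K)$ differs from the ultraproduct valuation on $K$. I would attempt this by exploiting the richness of $K$ (its enormous transcendence degree and the many algebraically independent ``uniformizers'' arising from distinct $\Q_p$-components) together with a saturation or back-and-forth argument realizing $K$ as an abstract subfield of a suitably chosen $L$.
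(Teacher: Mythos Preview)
Your proposal has the right overall shape---pass to an ultraproduct, use \L{}o\'s, and then exploit upward/downward preservation of existential/universal formulas along an $\Lring$-embedding that fails to respect the valuation---but the decisive step is left undone. You explicitly acknowledge that constructing $\iota:K\hookrightarrow L$ (or $L\hookrightarrow K$) with $L$ Henselian, $L\equiv K$, pseudo-finite residue field, and $\iota$ not value-preserving is ``the main obstacle,'' and the gestures toward ``richness,'' ``saturation,'' and ``back-and-forth'' do not constitute an argument. Nothing you have written explains why such an $L$ and $\iota$ should exist; every valued-field embedding between Henselian fields you can write down directly will respect the valuation, and producing one that does not while keeping the target in the right elementary class is genuinely nontrivial. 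Until that construction is carried out, the proof is incomplete.

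The paper closes this gap by a much shorter route that bypasses the search for an exotic embedding. Instead of an arbitrary non-principal ultrafilter, it invokes Ax's theorem to pick $\cU$ so that the residue field $k=\prod_p\FF_p/\cU$ contains all of $\Q^{alg}$. With this single choice, every finite extension $L$ of $K$ has residue field elementarily equivalent to $k$ (pseudo-finite fields with the same absolute numbers are elementarily equivalent) and value group elementarily equivalent to $\ZZ$; hence $L\equiv K$ by Ax--Kochen, and $\varphi(L)=\cO_L$ for every such $L$. That contradicts Theorem~\ref{main3} directly. If you unwind Theorem~\ref{main3}, the ``bad'' embedding you were seeking is realized concretely and for free as the directed union $K\hookrightarrow K^{alg}=\bigcup_L L$: existential persistence along this union forces $\varphi(K^{alg})=\cO_{K^{alg}}$, impossible in an algebraically closed field. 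The detour through $\varphi_0$ from Theorem~\ref{main} and a hypothetical saturated model is unnecessary once the ultrafilter is chosen via Ax.
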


For a fixed local field of characteristic zero, we can give existential and universal definitions.
\begin{thm}\label{e-def}Let $K$ be a finite extension of $\QQ_p$. Then the valuation ring $\cO_K$ of $K$ is
definable by an existential formula in $\cL_{\rm ring}$ and also by a universal formula in $\cL_{\rm ring}$.
\end{thm}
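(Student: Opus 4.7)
The plan is to give the existential formula first via a Hensel's lemma argument, and then derive the universal formula by complementation after observing that $\cM_K$ is also existentially definable.

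For the existential step I would first locate an irreducible $g(X)\in\ZZ[X]$ that is also irreducible over $\QQ_p$ and admits a uniformizer of $K$ as a root. Such a $g$ exists by Krasner's lemma: starting from any uniformizer $\pi_0\in K$ with $\QQ_p$-minimal polynomial $h_0$, I would approximate $h_0$ by $h\in\QQ[X]$ with sufficiently close coefficients, whereupon Krasner supplies a root $\pi$ of $h$ in $K$ with $\QQ_p(\pi)=\QQ_p(\pi_0)$; then $h$ is the common $\QQ$- and $\QQ_p$-minimal polynomial of $\pi$, and clearing denominators yields $g\in\ZZ[X]$. Because $g$ is irreducible over $\QQ_p$, every root of $g$ in $K$ is a $\QQ_p$-Galois conjugate of $\pi$, and hence a uniformizer of $K$. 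I would also fix a prime $m\neq p$ with $m>e+1$, where $e=\ord(p)$ is the ramification index. The proposed formula is
\[
\varphi(x)\;\equiv\;\exists\pi\,\exists y\,\bigl(g(\pi)=0\;\wedge\;y^m=1+p\pi x^m\bigr).
\]
If $x\in\cO_K$ then $\ord(p\pi x^m)\geq e+1>0=2\ord(m)$, so Hensel's lemma applied to $T^m-(1+p\pi x^m)$ at $T=1$ yields a root $y\in K$. If $x\notin\cO_K$ then $\ord(x)\leq -1$, hence $\ord(p\pi x^m)\leq e+1-m<0$ and $\ord(1+p\pi x^m)=e+1+m\ord(x)\equiv e+1\pmod m$; since $0<e+1<m$ this residue is nonzero, so $1+p\pi x^m$ is not an $m$-th power.

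For the universal step, once $\cO_K$ is known to be existentially definable the maximal ideal is too, via
\[
y\in\cM_K\;\Longleftrightarrow\;\exists\pi\,\exists z\,\bigl(g(\pi)=0\;\wedge\;y=\pi z\;\wedge\;\varphi(z)\bigr).
\]
Then $K\setminus\cO_K=\{x:\exists y\,(xy=1\wedge y\in\cM_K)\}$ becomes existentially definable (having negative valuation is exactly having a nonzero inverse in $\cM_K$), so its complement provides a universal $\cL_{\rm ring}$-formula for $\cO_K$.

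The main work will be in the existential step, where the exponent $m$ and the uniformizer $\pi$ must be selected so that Hensel's lemma separates $\cO_K$ from its complement. The constraint of working $\emptyset$-definably in the ring language is what forces the detour through Krasner's lemma, to obtain an \emph{algebraic} uniformizer which can be encoded by the integer polynomial $g$; once this is in place, the universal definition follows formally.
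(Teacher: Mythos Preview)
Your argument is correct and follows the same overall architecture as the paper's proof: existentially pin down a uniformizer by a polynomial with rational (hence $\emptyset$-definable) coefficients, then run a Hensel/valuation-divisibility dichotomy to characterize $\cO_K$; afterwards pass to $\cM_K$ and complement to get the universal definition. The universal step is essentially identical to the paper's.

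The genuine difference is in how the uniformizer is encoded. The paper works in two stages: it first fixes a monic $G\in\ZZ[X]$ whose roots generate the maximal unramified subextension $L\subset K$, and then approximates an Eisenstein polynomial over $L$ by one with coefficients in $\QQ(\gamma)$, so that the formula reads $\exists z\,\exists y\,(G(z)=0\wedge H^*_z(y)=0\wedge\ldots)$, with the crucial check that for \emph{every} root $z$ of $G$ the polynomial $H^*_z$ remains Eisenstein. You instead go directly for the $\QQ_p$-minimal polynomial of a uniformizer and use Krasner once to replace its coefficients by rationals; irreducibility over $\QQ_p$ then guarantees all $K$-roots of $g$ are conjugates of $\pi$ and hence uniformizers. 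Your route is shorter and avoids the unramified/totally-ramified tower entirely. On the Hensel side the paper uses exponent $2$ (or $3$ when $p=2$) with the term $1+yx^2$, whereas you use a single prime $m>e+1$ with $m\neq p$ and the term $1+p\pi x^m$; both choices work, and in fact the extra factor $p$ and the bound $m>e+1$ are not needed (taking $1+\pi x^m$ with any prime $m\neq p$ already gives $\ord(1+\pi x^m)\equiv 1\pmod m$ when $x\notin\cO_K$), but they do no harm.
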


\section{Negative results}
\subsection{Proof of Theorem \ref{main3}}
Suppose that there was such an existential formula $\psi(x)$. Let
$K^{alg}$ denote the algebraic closure of $K$. By \cite[Lemma 4.1.1 and Theorem 4.1.3]{EngPres}, there
is a unique valuation on $K^{alg}$ extending the valuation on $K$. The valuation ring $\cO_K$
has a unique prolongation to every algebraic extension of $K$. The valuation ring $\mathcal{O}_{K^{alg}}$
of $K^{alg}$ is the union of the valuation rings of the finite extensions $L$, and is thus
contained in $\psi(K^{alg})$. On the other hand, if $a\in \psi(K^{alg})$,
then $a\in \psi(L)$ for some
finite extension $L$ of $K$. Thus $a$ lies in the valuation ring of $L$, and hence
$a\in \mathcal{O}_{K^{alg}}$. So $\psi(K^{alg})$ coincides with the valuation ring of
$K^{alg}$ which implies that it must be finite or cofinite, contradiction.

We will now show that there is no existential formula $\theta(x)$ in the language of rings
such that for all finite extensions $L$ of $K$
$$\theta(L)=\mathcal{M}_L.$$
Suppose that there was such a formula $\theta(x)$. Then since the maximal ideal of $\mathcal{O}_{K^{alg}}$ is the union of
the maximal ideals $\mathcal{M}_L$ over all finite extensions $L$ of $K$, we see that
if $a\in \mathcal{M}_{K^{alg}}$, then $a\in \mathcal{M}_L$ for some finite extension $L$ of $K$, hence $\theta(a)$ holds in $L$,
so $\theta(a)$ holds in $K^{alg}$. Conversely, if $K^{alg}\models \theta(a)$, where $a\in K^{alg}$,
then $L\models \theta(a)$ for
some finite extension $L$ of $K$, hence $a\in \mathcal{M}_L$, thus $a\in \mathcal{M}_{K^{alg}}$. Therefore
$\theta(K^{alg})$ coincides with the maximal ideal of the valuation ring of $K^{alg}$ which implies that it must be finite
or cofinite, contradiction.

If $\theta(x)$ is a formula defining $\mathcal{M}_L$, then the formula
$$\sigma(x):=\exists z (xz=1 \wedge \theta(z))$$
defines the set $L \setminus \mathcal{O}_L$. We deduce that
there does not exist an existential formula $\sigma(x)$
in the language of rings such that for all finite extensions $L$ of $K$
$$\sigma(L)=L \setminus \mathcal{O}_L.$$
Thus there does not exist a universal formula $\eta(x)$
of the language of rings such that for all finite extensions $L$ of $K$
$$\eta(L)=\mathcal{O}_L.$$
The proof of Theorem \ref{main3} is complete.

\subsection{Proof of Theorem \ref{main4}}

Suppose there is such a formula $\varphi(x)$. By a result of Ax \cite[Proposition 7, pp.260]{Ax}, there is 
an ultrafilter $\mathcal{U}$ on the set $\PP$ of all primes such that the ultraproduct 
$k=(\prod_{p\in \PP} \F_p)/\mathcal{U}$ satisfies 
$$k\cap \Q^{alg}=\Q^{alg}.$$
The field $K=(\prod_{p\in \PP} \Q_p)/\mathcal{U}$ is Henselian with residue field $k$, which is pseudo-finite  
of characteristic zero, and value group an ultrapower of $\Z$. 


If $L$ is a finite extension of $K$, the residue field $k'$ of $L$ is a finite extension of $k$, hence is pseudo-finite and 
has the same algebraic numbers as $k$. Since two pseudo-finite fields with isomorphic subfields of algebraic numbers are 
elementarily equivalent (\cite[Theorem 4, pp.255]{Ax}), $k'\equiv k$. Thus 
all residue fields of finite extensions of $K$ are elementarily equivalent to $k$ 
and all value groups are elementarily equivalent to $\Z$. So, 
by the theorem of Ax-Kochen \cite[Theorem 3, pp.440]{AK}, $L\equiv K$ for all finite extensions $L$ of $K$, 
and so $\cO_L = \varphi(L)$ uniformly, contradicting Theorem \ref{main3}.

\section{Proof of Theorem \ref{e-def}}

Suppose $K$ has degree $n$ over $\QQ_p$. We have $n=ef$, where $f$ and $e$ are respectively the
residue field dimension and ramification index of $K$ over $\QQ_p$ (cf.~\cite{Frohlich}). 
Let $L$ be the maximal unramified extension of $\QQ_p$ inside $K$. $L$ has residue field $\FF_{p^f}$ and value group
$\Z$ for the valuation $\ord$ extending the $p$-adic valuation of $\QQ_p$. $K$ has value group $(1/e) \Z$ for the valuation
$\ord$. We denote by $| |$ the corresponding norm on $K$.

Select (non-uniquely) a monic irreducible polynomial $G_0(x)$ over $\FF_p$ of degree $f$ such that $\FF_{p^f}$ is the splitting
field of $G_0(x)$. Consider a monic polynomial $G(x)$ over $\Z$ which reduces to $G_0(x)$ mod $p$. The polynomial $G_0(x)$
has a simple root in $\FF_{p^f}$, so by Hensel's Lemma, $G(x)$ has a root $\gamma$ in $L$.
\begin{claim} $L=\QQ_p(\gamma)$.
\end{claim}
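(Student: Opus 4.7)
The plan is to prove the two inclusions $\QQ_p(\gamma) \subseteq L$ and $L \subseteq \QQ_p(\gamma)$ by controlling the residue degree of $\QQ_p(\gamma)$ over $\QQ_p$.

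The inclusion $\QQ_p(\gamma) \subseteq L$ is immediate because $\gamma \in L$ by construction, and $L$ is a field containing $\QQ_p$.

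For the reverse inclusion, I would first reduce modulo $p$ to pass to the residue field. Since $\gamma \in L$ and $G(\gamma)=0$ with $G$ monic and having coefficients in $\ZZ$, the residue $\bar\gamma$ lies in the residue field $\FF_{p^f}$ of $L$ and satisfies $G_0(\bar\gamma) = 0$. Because $G_0$ is irreducible of degree $f$ over $\FF_p$, we get $[\FF_p(\bar\gamma) : \FF_p] = f$, so $\FF_p(\bar\gamma) = \FF_{p^f}$. Therefore the residue field of the subfield $\QQ_p(\gamma) \subseteq L$ already equals $\FF_{p^f}$, which forces the residue degree $f(\QQ_p(\gamma)/\QQ_p)$ to be at least $f$, and hence $[\QQ_p(\gamma) : \QQ_p] \geq f$.

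The proof concludes by comparing degrees: we have $[L : \QQ_p] = f$ by definition of $L$ as the maximal unramified subextension of $K/\QQ_p$, and $\QQ_p(\gamma) \subseteq L$. Combining this with $[\QQ_p(\gamma) : \QQ_p] \geq f$ gives $\QQ_p(\gamma) = L$, as required.

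There is no real obstacle here; the argument is a standard use of the fact that Hensel-lifted roots of irreducible residual polynomials generate unramified extensions of the correct degree. The only point to be a bit careful about is that one uses $[\QQ_p(\gamma):\QQ_p] \geq$ (residue degree of $\QQ_p(\gamma)/\QQ_p$), which follows from the basic fundamental inequality $[F':F] \geq e(F'/F)\, f(F'/F)$ for extensions of valued fields (with equality for finite extensions of $\QQ_p$).
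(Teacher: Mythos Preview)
Your proof is correct and follows essentially the same approach as the paper's: the containment $\QQ_p(\gamma)\subseteq L$ is immediate, the residue field of $\QQ_p(\gamma)$ contains $\FF_{p^f}$ because $\bar\gamma$ is a root of the irreducible $G_0$, and then a degree comparison using $[L:\QQ_p]=f$ finishes. You have simply filled in more detail (e.g.\ invoking the fundamental inequality explicitly) than the paper's terse version.
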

\begin{proof}[Proof of the claim] Clearly $\QQ_p(\gamma) \subset L$. But the residue field of $\QQ_p(\gamma)$ contains $\FF_{p^f}$. So the
dimension of $\QQ_p(\gamma)$ over $\QQ_p$ is at least $f$. So $L=\QQ_p(\gamma)$.\end{proof}

Note that $G(x)$ is irreducible over $\Z_p$ and so over $\QQ_p$, and $G(x)$ splits in $L$. 
Thus all the roots of $G(x)$ are conjugate over
$\QQ_p$ by automorphisms of $L$.
 We can choose an Eisenstein polynomial over $L$ of the form
$$x^e+H_{e-1}(\gamma)x^{e-1}+\dots+H_0(\gamma) \in L[x],$$
where for $i\in \{0,\dots,e-1\}$, $H_j(z)$ is a polynomial in the variable $z$ over $\QQ_p$. We aim to get an Eisenstein 
polynomial whose coefficients are in $\QQ(\gamma)$.
For any polynomials $H_0^*(z),\dots,H_{e-1}^*(z)$ over $\QQ$, we let 
$$H^*_z(x):=x^e+H^*_{e-1}(z)x^{e-1}+\dots+H^*_0(z) \in \Q(z)[x].$$
If $H_j^*(z)$ is such that $\lvert H_j(z)-H_j^*(z) \lvert$ is very small, then since $\ord(\gamma)\in \Z$, it follows that 
$\lvert H_j(\gamma)-H_j^*(\gamma)\lvert$ is also very small. Thus we can choose $H_j^*(z)$ over $\QQ$ 
sufficiently close to $H_j(z)$ 
so that $H^*_{\gamma}(x) \in \Q(\gamma)[x]$ is Eisenstein. So 
$H^*_{\gamma}(x)$ is irreducible over $L$, and, by Krasner's Lemma, it has a root in $K$ which generates
$K$ over $L$. For any other root $\gamma'$ of $G(x)$, there is a $\QQ_p$-automorphism $\sigma$ of $L$ such that
$\sigma(\gamma)=\gamma'$, and thus $\sigma(H^*_j(\gamma))=H^*_j(\gamma')$. 
Since $L$ is unramified over $\QQ_p$ and $p$ is a uniformizer in $L$, the
valuation ring of $L$ is definable without parameters and $\sigma$ preserves the valuation. Thus
$H^*_{\gamma'}(x)$ is also an Eisenstein polynomial. By \cite[Theorem 1, p.23]{Frohlich}, any root of an
Eisenstein polynomial is a uniformizer. We have thus shown that for any root $\eta$ of $G(x)$, any
root of $H^*_{\eta}(x)$ is a uniformizer. Indeed, $\{t: \exists \eta~G(\eta)=0 \wedge H^*_{\eta}(t)=0\}$ is 
an existentially definable nonempty set of uniformizers. So using Hensel's Lemma, we can define $\cO_K$ by
$$\exists z \exists y \exists w ~(G(z)=0 \wedge H^*_{z}(y)=0 \wedge 1+yx^2=w^2)$$
if $p\neq 2$, and 
$$\exists z \exists y \exists w ~(G(z)=0 \wedge H^*_{z}(y)=0 \wedge 1+yx^3=w^3)$$
if $p\neq 3$.

This completes the proof of existential definability of $\cO_K$. Note that combined with the remark about 
existential definition of a nonempty set of uniformizers, it gives existential definition of the set of 
uniformizers, and so of the maximal ideal $\cM_K$ as the set of elements of $K$ which are 
a product of a uniformizer and an element of $\cO_K$. Thus the complement of $\cO_K$ is existentially definable 
as the set of inverses of elements of $\cM_K$. Hence $\cO_K$ is universally definable.

\section{Proof of Theorems~\ref{main} and \ref{main2}}
For any prime number $p$, let $T_p(x)$ be the condition about $1$ free variable $x$ expressing that
$$ 
 p^p + x\in P_p \wedge x\not\in P_p.  
$$
Let $T(x)$ be the property about $x\in K$ saying that
$$ 
T_{2}(x) \vee T_{3}(x).
$$
Let $T^+(x)$ be the statement 
$$x\neq 0 \wedge \neg P_2^{AS}(x) \wedge \neg P_2^{AS}(x^{-1}).$$
Recall that $\wedge$ stands for conjunction and $\vee$ for disjunction in first-order languages.

\begin{lem}\label{dim}
Let $k$ be a pseudo-finite field. If the characteristic of $k$ is different from $2$, then $T_{2}(k)$ is infinite.
If the characteristic of $k$ is $2$ and $k$ contains a non-cube, then $T_{3}(k)$ is infinite.\end{lem}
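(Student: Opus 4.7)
The plan is to reduce to a character sum estimate on finite fields, using Ax's characterisation of pseudo-finite fields. For each integer $N$, the statement ``$|T_p(k)| \geq N$'' is a first-order sentence, so by Ax's theorem $k$ is elementarily equivalent to a non-principal ultraproduct $K = \prod_i \F_{q_i}/\mathcal{U}$ of finite fields; by {\L}o\'s's theorem, it then suffices to show that $|T_p(\F_{q_i})| \to \infty$ for $\mathcal{U}$-almost all $i$.

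First I would extract the constraints on the factors $\F_{q_i}$ from the hypotheses. When $\mathrm{char}\,k \neq 2$, the ultraproduct $K$ has the same characteristic as $k$, so $\mathcal{U}$-almost every $\F_{q_i}$ has odd characteristic, and non-principality of $\mathcal{U}$ forces $q_i \to \infty$. When $\mathrm{char}\,k = 2$ and $k$ contains a non-cube, the first-order sentence $\exists x\,(x \neq 0 \wedge \forall y\,\, x \neq y^3)$ transfers by {\L}o\'s to $\mathcal{U}$-almost every $\F_{q_i}$, hence $q_i = 2^{n_i}$ with $3 \mid q_i - 1$ (equivalently $n_i$ even), and again $q_i \to \infty$ along $\mathcal{U}$.

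The heart of the argument is the character sum estimate on each $\F_q$. For $q$ odd, writing the indicator of ``nonzero square'' in terms of the quadratic character $\chi$ and expanding gives
\[
|T_2(\F_q)| \;=\; \tfrac{1}{4}q \;-\; \tfrac{1}{4}\sum_{x \in \F_q}\chi\bigl(x(x+4)\bigr) \;+\; O(1);
\]
the polynomial $x(x+4)$ has two distinct roots (since $\mathrm{char}\neq 2$), so the Weil bound yields $|T_2(\F_q)| = q/4 + O(\sqrt{q}) \to \infty$. For $q = 2^n$ with $3 \mid q-1$, a nontrivial cubic character $\chi$ of $\F_q^*$ exists; the analogous expansion of $|T_3(\F_q)|$ produces cross sums $\sum_x \chi^a(x)\chi^b(x+1)$ with $(a,b) \neq (0,0)$, each $O(\sqrt{q})$ by Weil applied to the absolutely irreducible curves $y^3 = x^a(x+1)^b$, giving $|T_3(\F_q)| = 2q/9 + O(\sqrt{q}) \to \infty$.

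The step I expect to require the most care is the character sum bookkeeping: one must verify absolute irreducibility of the auxiliary curves (so that the Weil bound applies in the stated form) and track the finitely many boundary contributions at $x \in \{0,-4\}$ for $T_2$ and at $x \in \{0,-1\}$ for $T_3$. Once these routine but delicate checks are in place, {\L}o\'s completes the argument: for every $N$, $|T_p(\F_{q_i})| \geq N$ holds on a set in $\mathcal{U}$, hence in $K$, hence in $k$, so $T_p(k)$ is infinite.
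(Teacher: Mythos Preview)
Your argument is correct, but it takes a different route from the paper's. You reduce to finite fields via Ax and {\L}o\'s and then count $T_p(\F_q)$ directly with multiplicative character sums and the Weil bound. The paper instead works in the pseudo-finite field $k$ itself: it picks a non-square (resp.\ non-cube) $a\in k^\times$, rewrites $T_2(x)$ as $\exists w,v\,(w^2=4+x\wedge av^2=x)$ (and analogously for $T_3$), observes that this is the image of an absolutely irreducible affine curve, and invokes the pseudo-algebraic closedness of $k$ to conclude that the curve, hence its projection $T_p(k)$, has infinitely many $k$-points. Both arguments ultimately rest on Weil-type estimates (PAC for pseudo-finite fields being a consequence of Lang--Weil), but the paper's version is shorter and avoids the character-sum bookkeeping and the boundary terms you flag; your version, in exchange, yields the explicit asymptotics $|T_2(\F_q)|=q/4+O(\sqrt{q})$ and $|T_3(\F_q)|=2q/9+O(\sqrt{q})$, which the paper's argument does not.
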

\begin{proof} 
Suppose the characteristic of $k$ is different from $2$. $k$ is elementarily equivalent to an 
ultraproduct of finite fields $\F_q$ where $q$ is a power of an odd prime. Thus $(q-1,2)\neq 1$, hence 
$\F_q^\times$ contains a non-square (cf. Section \ref{sec-app}, Proposition \ref{app}). 
Thus $k^\times$ contains a non-square $a$. Then $T_2(x)$ is equivalent with
$$
\exists w,v  (w^2 = 4 + x \wedge a v^2 = x).
$$
Now consider the curve $C$ given by $w^2 = 4 + x,\ a v^2 = x$ in $\AA^3$. Since this is an absolutely irreducible curve 
defined over $k$, it follows by the pseudo-algebraic closedness of $k$ that $C(k)$ is infinite. Thus, $T_2(k)$ is infinite. 
The proof for characteristic $2$ is similar.\end{proof}

\begin{lem}\label{dim2} $T^+(k)$ is infinite for every pseudo-finite field $k$.
\end{lem}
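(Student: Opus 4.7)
The plan is to exploit that every pseudo-finite field $k$ is PAC, so any absolutely irreducible variety over $k$ has a $k$-point, and in particular every absolutely irreducible affine curve over $k$ has infinitely many $k$-points (remove any finite set of points and reapply PAC to the resulting open subvariety, which remains absolutely irreducible). It thus suffices, in each characteristic, to exhibit an absolutely irreducible affine curve $C$ over $k$ together with a morphism $C\to\AA^1_x$ whose image, outside finitely many exceptional fibres, lies in $T^+(k)$.

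In characteristic $\ne 2$, I use the equivalence $P_2^{AS}(x)\Leftrightarrow P_2(1+4x)$ (valid away from $x=-1/4$) to reformulate $T^+(x)$ as the conjunction that $1+4x$ and $x(x+4)$ are both nonzero non-squares. By Proposition~\ref{app} there is a non-square $c\in k^\times$, and I take
$$
C\colon \quad y^2=c(1+4x), \qquad z^2=c\,x(x+4)
$$
in $\AA^3$; any $(x,y,z)\in C(k)$ with $yz\ne 0$ then satisfies $x\in T^+(k)$. After absorbing $c$ over $\bar k$, the curve $C$ becomes birational to $Z^2=(Y^2-1)(Y^2+15)$, and absolute irreducibility reduces to checking that $(Y^2-1)(Y^2+15)$ is not a square in $\bar k[Y]$; this is clear whenever the four roots $\pm 1,\pm\sqrt{-15}$ are distinct, with a short ad hoc check in the small characteristics where two roots collide.

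In characteristic $2$, the set $AS(k):=\{y^2+y:y\in k\}$ is an additive subgroup of $k$, and it is proper in $k$ by elementary equivalence, since in each $\F_{2^n}$ it is the kernel of the trace. Fixing $a\in k\setminus AS(k)$, I take
$$
C\colon \quad (a+y^2+y)(a+z^2+z)=1
$$
in $\AA^2$. For any $(y,z)\in C(k)$, setting $x:=a+y^2+y$ yields $x\ne 0$ and $x,\, x^{-1}=a+z^2+z\notin AS(k)$, since $AS(k)$ is a subgroup and $a\notin AS(k)$, so $T^+(x)$ holds. For absolute irreducibility I view $C$ over $\bar k$ as the fibre product, over the hyperbola $uv=1$, of the two Artin--Schreier covers $y^2+y=u-a$ and $z^2+z=1/u-a$; a pole-order argument in $\bar k(u)$ shows each of these AS-extensions is nontrivial and that the two are distinct, whence the function field of $C$ is a $(\Z/2)^2$-Galois extension of $\bar k(u)$, in particular a field, so $C$ is absolutely irreducible.

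The main obstacle is the absolute-irreducibility verification, especially in characteristic $2$, where the usual discriminant criterion fails and one has to argue either by an explicit factorisation or, as above, via the Galois theory of the function field.
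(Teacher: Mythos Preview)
Your proof is correct and follows essentially the same approach as the paper: in each characteristic you construct the same curve as the paper does (up to an obvious change of coordinates---in characteristic $\ne 2$ your equation $z^2=cx(x+4)$ is just the paper's $1+4x^{-1}=av^2$ cleared of denominators, and in characteristic $2$ your plane model is the projection of the paper's curve in $\AA^3$ eliminating $x$), and then invoke PAC. The only substantive difference is that you actually supply the absolute-irreducibility verification, via the quartic-discriminant/hyperelliptic check in odd characteristic and the Artin--Schreier/Galois argument in characteristic $2$, whereas the paper simply asserts irreducibility; your added detail is correct and welcome.
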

\begin{proof}
Given a pseudo-finite field $k$ choose $a\in k \setminus P_2^{AS}(k)$ if $k$ has characteristic $2$ 
and $a\in k \setminus k^2$ if $k$ has characteristic different from $2$, and define the curve $\mathcal{C}_a$ by 
$$w^2+w=a-x$$
$$v^2+v=a-x^{-1}$$
if $k$ has characteristic $2$; and 
$$1+4x=aw^2$$
$$1+4x^{-1}=av^2$$
if $k$ has characteristic different from $2$. Then $\mathcal{C}_a$ is an absolutely irreducible curve in $\AA^3$. Since $k$ is 
pseudo-algebraically closed, $\mathcal{C}_a(k)$ is infinite. Note that 
$$T^+(x) \Leftrightarrow \exists v \exists w~(v,w,x)\in \mathcal{C}_a(k),$$
which completes the proof.
\end{proof}

\begin{lem}\label{val}
Let $K$ be any Henselian valued field with residue field $k$. 
Then, $T(K)$ is a subset of the valuation ring $\cO_K$ and $T^+(K)$ is a subset of the units $\cO_K^\times$. 
Moreover, $T(K)$ contains both the sets
$$
res^{-1}(T_{2}(k)\setminus\{0\})
\ \mbox{ and }\ res^{-1}(T_{3}(k)\setminus\{0\}),
$$
and $T^+(K)$ contains $res^{-1}(T^+(k))$.
\end{lem}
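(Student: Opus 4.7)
The proof splits naturally into four claims: (i) $T(K) \subseteq \cO_K$, (ii) $T^+(K) \subseteq \cO_K^\times$, (iii) $res^{-1}(T_p(k)\setminus\{0\}) \subseteq T_p(K)$ for $p\in\{2,3\}$, and (iv) $res^{-1}(T^+(k)) \subseteq T^+(K)$. The plan is to handle all four via Hensel's lemma in $K$, after first dispensing with the degenerate characteristic situations: if $\mathrm{char}(K)=p$ then $p^p=0$ in $K$ and the conjunction defining $T_p(K)$ becomes $P_p\wedge\neg P_p$, hence empty; similarly if $\mathrm{char}(k)=p$ then $T_p(k)$ is empty, so (i) and (iii) for that prime are vacuous in those situations.

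For the containment (i), suppose $x\in T_p(K)$ with $y^p=p^p+x$ and, for contradiction, $\ord(x)<0$. In each remaining case $\ord(x)<\ord(p^p)$, so $\ord(y^p)=\ord(x)$ and $u:=1+p^p/x=y^p/x$ satisfies $\ord(u-1)=p\,\ord(p)-\ord(x)$. Hensel's lemma applied to $Z^p-u$ at $Z=1$ requires $\ord(u-1)>2\ord(p)$, i.e.\ $\ord(x)<(p-2)\ord(p)$, which holds for $p=2$ (since $\ord(x)<0$) and for $p=3$ (since $\ord(x)<0\le\ord(3)$). The resulting $v$ with $v^p=u$ yields $x=(y/v)^p$, contradicting $x\notin P_p$. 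For (ii), if $x\in T^+(K)$ had $\ord(x)\ne 0$, by symmetry of the two failure clauses one may assume $\ord(x)>0$; then Hensel applied to $f(Y)=Y^2+Y-x$ at $Y=0$ satisfies $\ord(f(0))=\ord(x)>0=2\ord(f'(0))$, producing a root that witnesses $P_2^{AS}(x)$, contradicting $T^+(x)$.

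For the preimage parts, (iii) proceeds by lifting witnesses from $k$: the degenerate-case reduction ensures $p\in\cO_K^\times$; lifting a residual $p$-th root $\bar y$ of $p^p+res(x)$ to $y_0\in\cO_K^\times$, Hensel applied to $Z^p-(p^p+x)$ at $y_0$ produces a genuine $p$-th root in $K$, since $p\,y_0^{p-1}$ is a unit. Conversely, any $y\in K^\times$ with $y^p=x$ lies in $\cO_K^\times$ (because $\ord(x)=0$), so its reduction would place $res(x)$ in $P_p(k)$, contrary to hypothesis. For (iv), if $x\in\cO_K^\times$ and $x=y^2+y$ in $K$, then $\ord(y)\ge 0$ (otherwise $\ord(x)=2\ord(y)<0$), so $res(y)^2+res(y)=res(x)$ would place $res(x)$ in $P_2^{AS}(k)$; applying the same reasoning to $x^{-1}$ handles the other clause.

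The only genuine obstacle is verifying the Hensel gap $\ord(u-1)>2\ord(p)$ in (i); this is precisely the numerology that singles out $p\in\{2,3\}$ and fails for larger primes, explaining why these two predicates are the ones used in Theorem \ref{main}.
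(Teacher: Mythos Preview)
Your proof is correct and follows the same route as the paper: Hensel applied to $Z^p-(1+p^p/x)$ for part (i), a direct Hensel lift for (iii), and for (ii) and (iv) you spell out what the paper simply calls ``immediate.'' One minor correction to your closing remark: the inequality $\ord(x)<(p-2)\ord(p)$ in fact holds for \emph{every} prime $p$ once $\ord(x)<0$ (since $(p-2)\ord(p)\ge 0$ in a valued field with $p\ne 0$), so $T_p(K)\subset\cO_K$ for all $p$; the restriction to $p\in\{2,3\}$ in Theorem~\ref{main} comes from Lemma~\ref{dim}, not from this one.
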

\begin{proof}
We first show that $T_2(K)\subset \cO_K$ for all Henselian valued fields $K$. 
It suffices to show for $x\in K\setminus \cO_K$ that $x$ is a square if and only if $x+4$ is a square. Let $x\in K\setminus 
\cO_K$. We show the left to right direction, the converse is similar. So assume $x$ is a square. 
It suffices to show that $1+4/x$ is a square, for then $x+4$ will be a product of two squares  
$1+4/x$ and $x$, hence a square.

Let $f(y):=y^2-1-4/x$. Since $|f'(1)|=|2|$ and $|x|>1$, we have 
$$|f(1)|=|4/x|<|4|=|2|^2=|f'(1)|^2.$$
Thus by Hensel's Lemma, $f(y)$ has a root in $\cO_K$. This shows that $T_2(K)\subset \cO_K$. 
One proceeds similarly to show that $T_3(K)\subset \cO_K$. It follows that $T(K)\subset \cO_K$ for all 
Henselian valued fields $K$. 

Now let $x\in T_2(k)\setminus \{0\}$. 
This implies that the characteristic of $k$ is not $2$. Thus if $\hat x\in \cO_K$ is any lift of $x$,
by Hensel's Lemma, $\hat x\in T_2(K)$, so $res^{-1}(T_2(k))\subset T_2(K)$. Similarly 
$x\in T_3(k)\setminus \{0\}$ implies that the characteristic of $k$ is not $3$, and $res^{-1}(T_3(k))\subset T_3(K)$. 
The other assertions concerning $T^+(K)$ and $T^+(k)$ are immediate. 
\end{proof}

We will use the following theorem of Chatzidakis - van den Dries - Macintyre \cite{CDM}. 
This result can be thought of as a definable version of the classical Cauchy - Davenport theorem.

\begin{thm}\cite[Proposition 2.12]{CDM}\label{CDM} Let $K$ be a pseudo-finite field and $S$ an infinite definable subset of $K$. 
Then every element of $K$ can be written as $a+b+cd$, with $a,b,c,d\in S$.
\end{thm}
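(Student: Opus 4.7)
My plan is to transfer the statement from $K$ to large finite fields $\F_q$ and settle it there by a Fourier/character-sum argument on the sum-product form $a+b+cd$. The assertion ``every $y\in K$ has the form $a+b+cd$ with $a,b,c,d\in S$'' is first-order in the parameters defining $S$, and every pseudo-finite field is elementarily equivalent to a non-principal ultraproduct of finite fields, so by {\L}o\'s's theorem it suffices to prove: for each $\Lring$-formula $\varphi(x;\bar t)$ there is a threshold $q_0$ such that whenever $q\geq q_0$ and $S:=\varphi(\F_q;\bar b)$ has size growing linearly in $q$, we have $S+S+S\cdot S=\F_q$.

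In the finite-field setting I would first secure the lower bound $|S|\geq c_\varphi\, q$: the quantifier-elimination and normal form for definable sets in finite fields reduce $S$, up to error $O(q^{1/2})$, to the set of $\F_q$-points of an absolutely irreducible variety of positive dimension, whence the Lang--Weil estimate gives $|S|=c_\varphi q+O(q^{1/2})$ with $c_\varphi>0$ precisely in the ``infinite-in-the-limit'' regime. Then for each $y\in\F_q$ I would expand the representation function by orthogonality of the additive characters $\psi$ of $\F_q$:
$$N(y)=\bigl|\{(a,b,c,d)\in S^4:a+b+cd=y\}\bigr|=\frac{|S|^4}{q}+\frac{1}{q}\sum_{t\neq 0}\overline{\psi(ty)}\,\widehat{\mathbf{1}_S}(t)^2\sum_{c,d\in S}\psi(t\,c\,d),$$
where $\widehat{\mathbf{1}_S}(t)=\sum_{a\in S}\psi(ta)$. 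The main term is $\gg q^3$; Weil's bound yields $|\widehat{\mathbf{1}_S}(t)|\ll q^{1/2}$ uniformly in $t\neq 0$, and fibering over $c$ together with Weil again gives $\bigl|\sum_{c,d\in S}\psi(t\,c\,d)\bigr|\ll q^{3/2}$, so the total error is $\ll q^{7/2}$, dominated by the main term for large $q$. Hence $N(y)>0$ for every $y\in\F_q$, and {\L}o\'s transports the conclusion back to $K$.

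The principal obstacle is making every one of these estimates \emph{uniform} in the formula $\varphi$. One must reduce an arbitrary definable set to a geometric normal form to which Lang--Weil and the Weil character-sum bounds apply with constants depending only on the combinatorial complexity of $\varphi$, not on $q$ or on the parameters $\bar b$. This is exactly the non-routine ingredient that the Chatzidakis--van den Dries--Macintyre machinery of definable dimension for pseudo-finite fields is designed to provide; once that uniform geometric control is in hand, the character-sum and counting steps are essentially routine and the transfer back to $K$ is immediate.
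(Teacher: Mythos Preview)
The paper does not prove this theorem: it is quoted as Proposition~2.12 of \cite{CDM} and used as a black box, so there is no proof in the present paper to compare your argument against.

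Evaluating your outline on its own merits, there is an arithmetic slip that, as written, breaks the argument. With your own bounds $|\widehat{\mathbf 1_S}(t)|\ll q^{1/2}$ and $\bigl|\sum_{c,d\in S}\psi(tcd)\bigr|\ll q^{3/2}$, the error term is
\[
\frac{1}{q}\sum_{t\neq 0}|\widehat{\mathbf 1_S}(t)|^{2}\,\Bigl|\sum_{c,d\in S}\psi(tcd)\Bigr|\ \ll\ \frac{1}{q}\cdot q\cdot q\cdot q^{3/2}=q^{5/2},
\]
not $q^{7/2}$. This matters because $q^{7/2}$ is \emph{not} dominated by the main term $|S|^{4}/q\gg q^{3}$, so the comparison you state actually fails; with the correct bound $q^{5/2}$ it does go through.

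More substantively, the estimate $|\widehat{\mathbf 1_S}(t)|\ll q^{1/2}$ for an arbitrary definable $S\subset\F_q$ of positive density is not ``Weil's bound'' in any direct sense. A definable set with $|S|\asymp q$ is generally not, even up to $O(q^{1/2})$, the $\F_q$-point set of an absolutely irreducible variety --- the set of nonzero squares already shows this. What one actually needs is to express $\mathbf 1_S$, via the CDM structure theory, as a virtual combination of fibre-counting functions for definable families of varieties, and then invoke exponential-sum bounds along those varieties (Weil for curves, Deligne in general) uniformly in the parameters. That package is available, but it is at least as deep as the CDM counting theorem you are ultimately relying on, so the character-sum route is not a genuine shortcut around \cite{CDM}; you have correctly located the principal obstacle, but ``Weil's bound'' here is shorthand for a nontrivial piece of work rather than a one-line citation.
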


\begin{cor}\label{cdm} Let $\varphi(x)$ be an $\cL_{\mathrm{ring}}$-formula 
such that $\varphi(k)$ is infinite for every pseudo-finite field $k$. Then there exists $N=N(\varphi)\geq 1$ such that 
$$K=\{a+b+cd: a,b,c,d \in \varphi(K)\}.$$
for every finite field $K$ of cardinality at least $N$.
\end{cor}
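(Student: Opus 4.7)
The plan is to argue by compactness via ultraproducts, transferring Theorem \ref{CDM} from pseudo-finite fields to sufficiently large finite fields.

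Suppose the corollary fails. Then for each $i \in \NN$ I can choose a finite field $K_i$ with $|K_i| \geq i$ together with an element $x_i \in K_i$ that cannot be written in the form $a+b+cd$ with $a,b,c,d \in \varphi(K_i)$. Fix any non-principal ultrafilter $\mathcal{U}$ on $\NN$ and form the ultraproduct $K = \prod_i K_i / \mathcal{U}$. Since $|K_i| \to \infty$ and $\mathcal{U}$ is non-principal, $K$ is infinite, hence pseudo-finite by Ax's classical theorem on ultraproducts of finite fields.

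Now the hypothesis on $\varphi$ forces $\varphi(K)$ to be infinite, since $K$ is pseudo-finite. On the other hand, the statement that $x \notin \{a+b+cd : a,b,c,d \in \varphi\}$ is a single first-order $\cL_{\mathrm{ring}}$-formula in the variable $x$, so \L o\'s's theorem applied to $x := [(x_i)] \in K$ yields
$$
x \notin \{a+b+cd : a,b,c,d \in \varphi(K)\}.
$$
This directly contradicts Theorem \ref{CDM} applied to the pseudo-finite field $K$ and the infinite definable subset $\varphi(K)$, completing the proof.

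The only subtlety is ensuring that the ultraproduct $K$ is genuinely pseudo-finite rather than just a finite field appearing infinitely often, which requires that $|K_i|$ tends to infinity along $\mathcal{U}$; this is built into the failure hypothesis through the bound $|K_i| \geq i$. All the remaining steps amount to the routine observation that the relevant first-order properties transfer under ultraproducts of finite fields.
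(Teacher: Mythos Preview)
Your argument is correct and is exactly the compactness argument the paper has in mind, just spelled out in ultraproduct form: an infinite ultraproduct of finite fields is pseudo-finite, and the failure of the desired identity transfers via \L o\'s, contradicting Theorem~\ref{CDM}. The paper's own proof is simply the one-line remark ``Follows from Theorem~\ref{CDM} and a compactness argument,'' so your approach coincides with it.
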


\begin{proof} Follows from Theorem \ref{CDM} and a compactness argument.\end{proof}

\begin{thm}\label{cor-to-cdm} 
Let $\varphi(x)$ be an $\cL_{\mathrm{ring}}$-formula such that $\varphi(k)$ is infinite for every pseudo-finite field $k$
and $\varphi(K)\subset \cO_K$ and $res^{-1}(\varphi(k))\subset \varphi(K)$ for every 
Henselian valued field $K$ with pseudo-finite residue field $k$. 
Then there exists $N\geq 1$ such that 
$$\cO_K=\{a+b+cd:~a,b,c,d\in \varphi(K)\}$$
for every Henselian valued field $K$ with finite or pseudo-finite residue field of cardinality at least $N$.
\end{thm}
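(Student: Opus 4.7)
The plan is to split the claim into two inclusions and handle the non-trivial direction by reducing to the residue field. The inclusion $\{a+b+cd: a,b,c,d\in \varphi(K)\}\subset \cO_K$ will be immediate, since $\varphi(K)\subset \cO_K$ by hypothesis and $\cO_K$ is a subring of $K$. For the reverse inclusion my strategy is to decompose $res(x)$ inside $k$ via Corollary~\ref{cdm} and Theorem~\ref{CDM}, lift the decomposition to $\cO_K$, and then absorb the lifting error into one of the summands.

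Concretely, I would first apply Corollary~\ref{cdm} to $\varphi$ to obtain a constant $N=N(\varphi)\geq 1$ for which every finite field $k$ of cardinality at least $N$ satisfies $k=\{\alpha+\beta+\gamma\delta:\alpha,\beta,\gamma,\delta\in \varphi(k)\}$. For pseudo-finite residue fields the same identity follows directly from Theorem~\ref{CDM} applied to the infinite definable set $\varphi(k)$, with no cardinality restriction needed. Now fix a Henselian valued field $K$ whose residue field $k$ is pseudo-finite, or finite of cardinality at least $N$, take an arbitrary $x\in \cO_K$, and write $res(x)=\alpha_0+\beta_0+\gamma_0\delta_0$ with all four elements in $\varphi(k)$. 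Then I pick arbitrary lifts $\alpha,\beta,\gamma,\delta\in \cO_K$; the hypothesis $res^{-1}(\varphi(k))\subset \varphi(K)$ places all four lifts inside $\varphi(K)$.

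The lifts do not recover $x$ on the nose, but the error $m:=x-\alpha-\beta-\gamma\delta$ satisfies $res(m)=0$ and hence lies in $\cM_K$. I would then replace $\alpha$ by $\alpha':=\alpha+m$; since $res(\alpha')=\alpha_0\in \varphi(k)$, the hypothesis $res^{-1}(\varphi(k))\subset \varphi(K)$ once more gives $\alpha'\in \varphi(K)$, and the identity $\alpha'+\beta+\gamma\delta=x$ provides the required decomposition.

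The only step with any content is this final error absorption, and it succeeds precisely because the closure condition $res^{-1}(\varphi(k))\subset \varphi(K)$ makes $\varphi(K)$ stable under addition of any element of $\cM_K$. Uniformity of $N$ across varying residue fields is already delivered by Corollary~\ref{cdm}, and the additive-multiplicative representation inside $k$ by Theorem~\ref{CDM}, so beyond this one lifting move there is essentially no further difficulty to overcome.
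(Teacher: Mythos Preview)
Your proposal is correct and follows essentially the same route as the paper: decompose $res(x)$ in $k$ via Theorem~\ref{CDM}/Corollary~\ref{cdm}, lift, and absorb the error using $res^{-1}(\varphi(k))\subset\varphi(K)$. The only cosmetic difference is that the paper lifts just three of the four residues and takes $\theta-\hat b-\hat c\hat d$ directly as the fourth element of $\varphi(K)$, whereas you lift all four and then correct one by $m$; these produce the same element.
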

\begin{proof} By assumption, the set on the right hand side is included in $\cO_K$, so we prove the other inclusion. 
Let $N$ be as in Corollary \ref{cdm}. Let $K$ denote a Henselian valued 
field with finite or pseudo-finite residue field of cardinality at least $N$. 
Let $\theta\in \cO_K$. Theorem \ref{CDM} and Corollary \ref{cdm} imply that 
$$res(\theta)=a+b+cd,$$
where $a,b,c,d\in \varphi(k)$. 
Let $\hat{b},\hat{c},\hat{d}$ denote elements of $\cO_K$ which map to $b,c,d$ respectively under the map $res$. So
$$res(\theta-(\hat{b}+\hat{c}\hat{d}))=a.$$
Thus 
$$\theta-(\hat{b}+\hat{c}\hat{d})\in \varphi(K).$$
Since $\hat{b},\hat{c},\hat{d}\in \varphi(K)$, the proof is complete.
\end{proof}

\begin{cor}\label{mainc} There exists $N>0$ such that
$$
\cO_K =\{a+b+cd:~a,b,c,d\in T(K)\}
$$
for any Henselian
valued field $K$ with finite or pseudo-finite residue field $k$ with cardinality at least $N$ provided that 
$k$ contains non-cubes in case its characteristic is $2$.
\end{cor}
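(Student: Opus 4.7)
The plan is to apply Theorem \ref{cor-to-cdm} to the formula $\varphi(x):=T(x)\wedge x\neq 0$, restricted to the class $\mathcal{K}$ of Henselian valued fields $K$ whose residue field $k$ satisfies the hypothesis of the corollary, i.e.\ either $\mathrm{char}\,k\neq 2$, or $\mathrm{char}\,k=2$ and $k$ contains a non-cube. The reverse inclusion $\{a+b+cd:a,b,c,d\in T(K)\}\subset\cO_K$ is immediate from Lemma \ref{val}, since $T(K)\subset\cO_K$ and $\cO_K$ is closed under the ring operations.

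For the forward inclusion I would verify the three hypotheses of Theorem \ref{cor-to-cdm} for $\varphi$. Infiniteness of $\varphi(k)$: Lemma \ref{dim} shows $T_2(k)$ is infinite when $\mathrm{char}\,k\neq 2$, and $T_3(k)$ is infinite when $\mathrm{char}\,k=2$ under the non-cube hypothesis, so $T(k)=T_2(k)\cup T_3(k)$ is infinite in both cases and $\varphi(k)=T(k)\setminus\{0\}$ remains infinite. Inclusion $\varphi(K)\subset\cO_K$ is part of Lemma \ref{val}. Inclusion $res^{-1}(\varphi(k))\subset\varphi(K)$: Lemma \ref{val} gives
$$res^{-1}(T(k)\setminus\{0\})=res^{-1}(T_2(k)\setminus\{0\})\cup res^{-1}(T_3(k)\setminus\{0\})\subset T(K),$$
and the elements on the left have nonzero residue, hence are themselves nonzero and thus lie in $\varphi(K)$. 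Theorem \ref{cor-to-cdm} then yields an $N$ for which $\cO_K=\{a+b+cd:a,b,c,d\in\varphi(K)\}\subset\{a+b+cd:a,b,c,d\in T(K)\}$ for every $K\in\mathcal{K}$ with $|k|\geq N$, completing the argument.

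The main point that requires care is that Theorem \ref{cor-to-cdm} is stated under the blanket hypothesis that $\varphi(k)$ is infinite for \emph{every} pseudo-finite $k$, whereas we only verify it within the restricted class $\mathcal{K}$. This is not a genuine obstacle: the class $\mathcal{K}$ is elementary (the non-cube condition is first-order), and the compactness argument underlying Corollary \ref{cdm}, on which Theorem \ref{cor-to-cdm} rests, applies verbatim inside $\mathcal{K}$, since an ultraproduct of finite fields in $\mathcal{K}$ remains a pseudo-finite field in $\mathcal{K}$. Apart from this bookkeeping, the corollary is a routine assembly of the preceding lemmas.
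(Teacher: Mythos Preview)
Your proof is correct and follows exactly the route the paper intends: the paper's own proof reads simply ``Immediate,'' meaning Corollary~\ref{mainc} is to be read off from Theorem~\ref{cor-to-cdm} together with Lemmas~\ref{dim} and~\ref{val}. You have supplied precisely those details, and your two points of care are well taken: first, working with $\varphi(x):=T(x)\wedge x\neq 0$ rather than $T(x)$ itself is genuinely necessary, since Lemma~\ref{val} only guarantees $res^{-1}(T_p(k)\setminus\{0\})\subset T(K)$, and indeed $0\in T(k)$ while $\cM_K\not\subset T(K)$ in general; second, your observation that Theorem~\ref{cor-to-cdm} and the underlying compactness argument of Corollary~\ref{cdm} go through verbatim when relativized to the elementary class of residue fields satisfying the non-cube hypothesis is exactly the bookkeeping the paper silently assumes.
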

\begin{proof} Immediate.\end{proof}
\begin{cor}\label{mainc2} There exists $N>0$ such that
$$
\cO_K =\{a+b+cd:~a,b,c,d\in T^+(K)\}
$$
for any Henselian
valued field $K$ with finite or pseudo-finite residue field $k$ with cardinality at least $N$.
\end{cor}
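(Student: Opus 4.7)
The plan is to apply Theorem \ref{cor-to-cdm} directly, with the auxiliary formula taken to capture the condition $T^+(x)$. Before doing so I want to recast $T^+(x)$ as a pure $\cL_{\rm ring}$-formula, since it was introduced using $P_2^{AS}$ and $x^{-1}$. Unfolding $P_2^{AS}(x)\Leftrightarrow\exists y(x=y^2+y)$ and clearing the denominator in $P_2^{AS}(x^{-1})$, the condition $T^+(x)$ is equivalent, over every field, to
$$\varphi(x)\ \equiv\ x\neq 0\ \wedge\ \neg\exists y(y^2+y=x)\ \wedge\ \neg\exists y(xy^2+xy=1).$$
This $\varphi$ lies in $\cL_{\rm ring}$, as required by Theorem \ref{cor-to-cdm}, and $\varphi(F)=T^+(F)$ for every field $F$.

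Next I verify the three hypotheses of Theorem \ref{cor-to-cdm} for this $\varphi$. Infiniteness of $\varphi(k)=T^+(k)$ for every pseudo-finite field $k$ is precisely Lemma \ref{dim2}. The inclusion $\varphi(K)\subseteq\cO_K$ for every Henselian valued field $K$ with pseudo-finite residue field is part of Lemma \ref{val} (which in fact gives the stronger statement $T^+(K)\subseteq\cO_K^\times$). The inclusion $res^{-1}(\varphi(k))\subseteq\varphi(K)$ is the final assertion of the same lemma. Thus Theorem \ref{cor-to-cdm} yields a constant $N$ such that
$$\cO_K=\{a+b+cd:\ a,b,c,d\in T^+(K)\}$$
holds for every Henselian valued field $K$ whose residue field is finite or pseudo-finite of cardinality at least $N$, which is exactly the statement of the corollary.

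There is essentially no obstacle: every nontrivial ingredient (the pseudo-algebraic closedness argument giving infinitude of $T^+$ on pseudo-finite fields, the Hensel-type argument showing $T^+(K)$ consists of units lifting $T^+(k)$, and the Cauchy--Davenport style representation $a+b+cd$) has been isolated in the preceding results, so the corollary is just an assembly of those pieces. The only small check worth being explicit about is that $T^+$ can be written in $\cL_{\rm ring}$, which the rewriting above settles.
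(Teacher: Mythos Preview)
Your proof is correct and follows exactly the approach the paper intends: the paper's own proof is the single word ``Immediate,'' meaning precisely this direct application of Theorem~\ref{cor-to-cdm} with $\varphi=T^+$, the hypotheses being supplied by Lemmas~\ref{dim2} and~\ref{val}. Your only addition is the explicit rewriting of $T^+$ as a pure $\cL_{\rm ring}$-formula, a technical point the paper passes over in silence but which is indeed required by the statement of Theorem~\ref{cor-to-cdm}.
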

\begin{proof} Immediate.
\end{proof}

For any integer $\ell>0$, $K$ any field, and $X\subset K$ any set, let $S_\ell(X)$ be the set consisting of all $y\in K$ 
such that $y^{\ell} - 1 + x \in X$ for some $x\in X$.

\begin{prop}\label{finite}
Let $K$ be a Henselian valued field with finite residue field $k$ with $q_K$ elements. Let  $\ell$ be any positive integer 
multiple of $q_K(q_K-1)$. Then one has
$$
\cO_K =  \{0,1\}  +  S_{\ell}(T^+(K)),
$$
where the sumset of two subsets $A,B$ of $K$ consists of the elements $a+b$ with $a\in A$ and $b\in B$. 
If $k$ has a non-cube in case it has characteristic different from $3$, then one has
$$
\cO_K =  \{0,1\}  +  S_{\ell}(T(K)).
$$
\end{prop}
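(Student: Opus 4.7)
The plan is to prove both equalities by reducing to the residue field via Lemma \ref{val}. The easy inclusion $\{0,1\}+S_\ell(T^+(K))\subseteq\cO_K$ (and the analogous one for $T$) follows because $T^+(K)\subseteq\cO_K^\times$ by Lemma \ref{val}: if $y\in S_\ell(T^+(K))$ and $x,x'\in T^+(K)$ witness $x'=y^\ell-1+x$, then $y^\ell=x'-x+1\in\cO_K$, and integral closure of $\cO_K$ in $K$ forces $y\in\cO_K$; adding $0$ or $1$ keeps us in $\cO_K$.

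For the reverse inclusion, given $\theta\in\cO_K$ I would peel off $\epsilon\in\{0,1\}$ so that $y:=\theta-\epsilon$ lies in $\cO_K^\times$: set $\epsilon=0$ if $\theta\in\cO_K^\times$, and $\epsilon=1$ if $\theta\in\cM_K$ (in which case $res(y)=-1\neq 0$). It remains to place $y\in S_\ell(T^+(K))$, that is, to find $x\in T^+(K)$ with $y^\ell-1+x\in T^+(K)$. Since $\ell$ is a multiple of $q_K-1$ and the quotient $\cO_K^\times/(1+\cM_K)\cong k^\times$ has exponent $q_K-1$, I get $y^\ell\equiv 1\pmod{\cM_K}$, so $y^\ell-1\in\cM_K$. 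Now I pick any $a\in T^+(k)$ and any lift $x\in\cO_K^\times$ of $a$. By Lemma \ref{val}, $x\in T^+(K)$, and since $res(y^\ell-1+x)=res(x)=a\in T^+(k)$, the same Lemma yields $y^\ell-1+x\in T^+(K)$, as required. The $T$-variant is entirely parallel: pick $a$ from $T_2(k)\setminus\{0\}$ or $T_3(k)\setminus\{0\}$ according to characteristic, using the non-cube hypothesis to cover the residue characteristic $2$ case, and invoke the corresponding inclusion of Lemma \ref{val}.

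The main obstacle I anticipate is checking nonemptiness of the relevant residue-field set for every finite $k$ allowed by the hypotheses, since Lemmas \ref{dim} and \ref{dim2} are stated for pseudo-finite $k$. A short counting argument covers the finite case: in characteristic $2$ the Artin--Schreier image has index $2$ in $(k,+)$, and in odd characteristic a non-square exists in $k^\times$, so an element $a$ with both $a$ and $a^{-1}$ outside the respective image can be produced by elementary bookkeeping. Concerning the exponent, only divisibility by $q_K-1$ is strictly needed above; the additional factor of $q_K$ in $q_K(q_K-1)$ yields the stronger congruence $y^\ell\in 1+\cM_K^2$ via $(1+m)^{q_K}\equiv 1\pmod{p\cM_K+\cM_K^{q_K}}$ (using $p\mid\binom{q_K}{k}$ for $0<k<q_K$ when $q_K=p^n$ and $p\in\cM_K$), which is recorded for subsequent uniform statements but is not itself essential for the present equalities.
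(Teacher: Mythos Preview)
Your argument for the $T^+$ equality is essentially correct and close in spirit to the paper's: both reduce to finding a unit witness $x\in T^+(K)$ with $y^\ell-1+x\in T^+(K)$, using only that $y^\ell-1\in\cM_K$. The nonemptiness of $T^+(k)$ that you flag as an obstacle is indeed the only point to check, and your pigeonhole sketch handles characteristic~$2$; in odd characteristic a short character-sum or case check for small $q$ finishes it.

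The $T$ argument, however, has a genuine gap. Your plan requires a nonzero element of $T_2(k)$ (if $\operatorname{char}k\neq 2$) or of $T_3(k)$ (if $\operatorname{char}k=2$) to lift, but such elements need not exist. For $k=\FF_3$ one has $P_2=\{1\}$, so $T_2(x)$ forces $1+x=1$, i.e.\ $x=0$; hence $T_2(\FF_3)\setminus\{0\}=\emptyset$. For $k=\FF_4$ one has $P_3=\{1\}$, so $T_3(x)$ forces $1+x=1$, i.e.\ $x=0$; hence $T_3(\FF_4)\setminus\{0\}=\emptyset$ as well. Both fields satisfy the hypothesis of the Proposition (the first has characteristic~$3$; the second has non-cubes), so your lifting step cannot be carried out there, and no ``short counting argument'' can rescue it.

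The paper avoids this by taking the witness $a$ not as a lift from the residue field but as a uniformizer $\pi$: then $\pi$ is automatically a non-square and a non-cube (its valuation is $1$), while $4+\pi$ (resp.\ $27+\pi$) is a square (resp.\ cube) by Hensel when $\operatorname{char}k\neq 2$ (resp.\ $\neq 3$), so $\pi\in T(K)$. But now one needs $(\alpha^\ell-1)/\pi\in\cM_K$, i.e.\ $\ord(\alpha^\ell-1)\geq 2$, not merely $\geq 1$. This stronger estimate is exactly where divisibility of $\ell$ by $q_K$ enters (via $p\mid\ell$ killing the linear term of $(a+b\pi)^\ell$). Thus your remark that ``only divisibility by $q_K-1$ is strictly needed'' is correct for the $T^+$ statement but false for the $T$ statement: the extra factor $q_K$ is what makes the $T$ case go through for small residue fields.
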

\begin{proof}
Since $\cO_K$ is integrally closed in $K$, for any $l>0$ and any Henselian valued field $K$, one has by Lemma \ref{val} that 
$$S_l(T(K))\subset \cO_K$$
and 
$$S_l(T^+(K))\subset \cO_K.$$

\begin{claim}\label{c-1} For any unit $y\in \cO_K$ there is a positive $\gamma$ in the value group such that 
$\ord(y^l-1)>\gamma$. \end{claim}
\begin{proof} There are two cases. Either the value group has a least positive element or it has 
arbitrarily small positive elements. Suppose the first case holds. 
Let $\pi$ denote an element of least positive valuation. 

We assume $K$ has residue field $\F_q$, with $q=p^f$. 
Fix a unit $y$. Let $a$ be a (not necessarily primitive) $(q-1)$-th root of unity such that 
$$|y-a|<1.$$
Note that $a$ exists by Hensel's Lemma since $y$ is a root of the polynomial $x^{q-1}-1$ modulo the maximal ideal 
and is clearly non-singular. 

Write $y$ as $a+b\pi$, where $b\in \cO_K$. Then 
$$y^l=1+ la^{l-1}b\pi+\dots+b^l\pi^l.$$
Note that the binomial coefficients are divisible by $l$, and hence by $q$ and thus by $\pi$ (as $\pi^e=p$ where 
$e$ is ramification index), and $l\geq 2$; therefore 
$$v(y^l-1)\geq 2.$$
This proves the Claim in the first case. In the second case, there are arbitrarily small positive elements in the value group 
and $y^l-1$ has some strictly positive valuation, hence $\gamma$ exists in this case.\end{proof}

\begin{claim}\label{c-2} Given $\gamma$ a positive element of the value group,
there is $a\in T(K)$ and $b\in T^+(K)$ such that $\ord(a)\leq \gamma$, 
$\ord(b)\leq \gamma$, and
$$a+a\cM_K\subset T(K)$$
$$b+b\cM_K\subset T^+(K).$$
\end{claim}
\begin{proof} Again, first assume that 
the value group has a least positive element $\pi$. Clearly $\pi$ is a non-square and a 
non-cube, and by Hensel's Lemma $4+\pi$ is a square if the residue characteristic is not equal to $2$, and 
$27+\pi$ is a cube if the residue characteristic is not equal to $3$. 
So we can take $a=\pi$, and by Hensel's Lemma we have 
$$a+a\cM_K\subset T(K).$$

In the case that there are elements of arbitrarily small positive value, there exist non-squares and non-cubes of 
arbitrarily small positive value. Indeed, fix a non-square $x$. We can choose $b$ such that its valuation is very close 
to half the valuation of $1/x$. Then $b^2x$ has valuation very close to zero. A similar argument works for the 
non-cubes. Then Hensel's Lemma as above completes the proof in this case. 

As for $T^+(K)$, given $\gamma>0$, choose any $b\in T^+(K)$. We have that $b$ is a unit and hence 
$\ord(b)=0<\gamma$. It follows from Hensel's Lemma that
$$b+b\cM_K\subset T^+(K)$$
since if $b+bm=y^2+y$ for some $y$, where $m\in \cM$, then 
$b-y^2-y$ has a non-singular root modulo the maximal ideal $\cM$; this 
contradicts $b\in T^+(K)$. This argument works for any value group.
\end{proof}
To complete the proof of the proposition take a unit $\alpha\in \cO_K$. 
By Claim \ref{c-1} there is $\gamma>0$ 
with
$$\ord(\alpha^l-1)>\gamma.$$
Choose elements $a\in T(K)$, 
and $b\in T^+(K)$ such that $\ord(a)\leq \gamma$ and $\ord(b)\leq \gamma$. Thus 
$$(\alpha^l-1)/a\in \cM_K$$
and 
$$(\alpha^l-1)/b\in \cM_K,$$
hence 
$$\alpha^l-1+a\in a+a\cM_K$$
and 
$$\alpha^l-1+b\in b+b\cM_K.$$
So by Claim \ref{c-2}, $\alpha\in S_l(T(K))$ and $\alpha\in S_l(T^+(K))$. 
This completes the proof.
\end{proof}


We can now give the proof of Theorems \ref{main} and \ref{main2}. 
By Lemma \ref{val}, for any $\ell>0$ and any Henselian valued field $K$ one has 
$$
S_{\ell}(T(K)) \subset  \cO_K.
$$
and 
$$
S_{\ell}(T^+(K)) \subset  \cO_K.
$$
From Proposition \ref{finite} and Corollary \ref{mainc} we deduce that 
there exists $\ell>0$ such that for any Henselian valued field $K$ with finite or pseudo-finite residue field we have  
\begin{equation}\label{cOK}
\cO_K = (\{0,1\} + S_{\ell}(T(K))) \cup \{a+b+cd:~a,b,c,d\in T(K)\}
\end{equation}
provided that the residue field $k$ contains a non-cube in case the characteristic of $k$ is $2$.
From Proposition \ref{finite} and Corollary \ref{mainc2} we deduce that 
\begin{equation}\label{cOK2}
\cO_K = (\{0,1\} + S_{\ell}(T^+(K))) \cup \{a+b+cd:~a,b,c,d\in T^+(K)\}
\end{equation}
for any Henselian valued field $K$ with finite or pseudo-finite residue field. 
Now Theorems \ref{main} and Theorem \ref{main2} follow since the unions in \ref{cOK} and 
\ref{cOK2} correspond to 
existential formulas in $\cL_{\rm{ring}}\cup \{P_2,P_3\}$ and $\cL_{\rm{ring}}\cup \{P_2^{AS}\}$ respectively.


\section{Appendix: Powers in pseudo-finite fields}\label{sec-app}
\begin{prop}\label{app} Let $p$ be a prime, $q$ a power of $p$, and $m\in \N$. The following are equivalent.
 \begin{itemize}
  \item $\F_q^*=(\F_q^*)^m$.
\item $(q-1,m)=1$.
\item $\F_h^*=(\F_h^*)^m$ for infinitely many powers $h$ of $p$.
 \end{itemize}
\end{prop}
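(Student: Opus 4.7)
The proof revolves around the cyclic structure of $\F_q^\times$ together with an elementary counting argument on multiplicative orders. My plan is to prove $(1) \Leftrightarrow (2)$ directly, then derive $(3)$ from $(2)$ by exhibiting an infinite family of admissible powers $h = q^k$, and finally obtain $(3) \Rightarrow (2)$ by contraposition, reading the family of powers of $p$ in $(3)$ as including all powers of $q$ (consistent with the way the proposition is applied, through ultraproducts of finite fields, in Lemmas~\ref{dim} and \ref{dim2}).

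For $(1) \Leftrightarrow (2)$ I would invoke the fact that $\F_q^\times$ is cyclic of order $q-1$: the $m$-th power endomorphism of a cyclic group of order $n$ has kernel of cardinality $\gcd(n, m)$, hence is surjective iff $\gcd(n, m) = 1$. This is a one-line observation once a generator is fixed.

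For $(2) \Rightarrow (3)$ I would restrict to the subfamily $h = q^k$, $k \geq 1$. For each prime $\ell \mid m$, the assumption $(q-1, m) = 1$ forces $q \not\equiv 1 \pmod{\ell}$, so the order $d_\ell := \operatorname{ord}_\ell(q)$ in $\F_\ell^\times$ is at least $2$, and $\ell \mid q^k - 1$ iff $d_\ell \mid k$. Hence the set of $k$ with $\gcd(q^k - 1, m) = 1$ is the set of $k \in \N$ avoiding divisibility by any of the finitely many integers $d_\ell \geq 2$, which is plainly infinite (take, for instance, any prime $k$ exceeding all the $d_\ell$). Applying $(1) \Leftrightarrow (2)$ to $\F_{q^k}$ yields $\F_{q^k}^\times = (\F_{q^k}^\times)^m$ for each such $k$.

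For $(3) \Rightarrow (2)$ I would argue contrapositively: if some prime $\ell$ divides $\gcd(q-1, m)$, then $\ell \mid q - 1 \mid q^k - 1$ for every $k \geq 1$, so no power of $q$ satisfies $(1)$, contradicting the intended content of $(3)$. The argument is essentially mechanical throughout; the only step needing more than a line is the counting of admissible exponents in $(2) \Rightarrow (3)$, and even there no genuine obstacle arises because the obstruction consists of a finite set of integers $d_\ell \geq 2$.
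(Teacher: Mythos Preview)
Your argument for $(1)\Leftrightarrow(2)$ is the same as the paper's. For $(2)\Rightarrow(3)$ you restrict to powers $h=q^k$ and control $\gcd(q^k-1,m)$ via the orders $d_\ell$ of $q$ modulo the prime divisors $\ell\mid m$ (you should note separately that $\ell=p$ is harmless, since $p\nmid q^k-1$ for any $k$); the paper instead observes that $p^t-1\pmod m$ is periodic in $t$ with period equal to the multiplicative order of $p$ modulo $m$, so one good exponent yields an entire arithmetic progression of them. Both routes are correct and essentially equivalent.

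The direction $(3)\Rightarrow(2)$, however, fails as literally stated, and this is a defect of the proposition itself rather than of your write-up. Take $p=2$, $q=4$, $m=3$: then $(q-1,m)=3$, so $(2)$ is false, while $(2^s-1,3)=1$ for every odd $s$, so $(3)$ holds. Your contrapositive only shows that no power of $q$ satisfies the condition, which does not contradict $(3)$, since $(3)$ ranges over all powers of $p$; your parenthetical about ``reading the family of powers of $p$ as including all powers of $q$'' is trivially true and does not rescue the argument (you would need the reverse inclusion). The paper's own proof of $(3)\Rightarrow(2)$ has the same gap: from periodicity it only extracts $(p^s-1,m)=1$ for \emph{some} exponent $s$, not for the specific $s_0$ with $q=p^{s_0}$. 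The equivalence becomes correct, and both proofs go through, if one either takes $q=p$ or replaces ``powers of $p$'' by ``powers of $q$'' in item $(3)$; the applications in the paper (and the one you allude to) only require these corrected versions.
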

\begin{proof} To show the first and second statements are equivalent, let $K=\F_q$. The multiplicative group 
$K^*$ is cyclic of order $q-1$. If 
$(m,q-1)=1$ then the map $x\rightarrow x^m$ is an automorphism of $K^*$. Conversely, 
if the map $x\rightarrow x^m$ from $K^*$ to $K^*$ is surjective, then it is injective. 
Choose $d$ with $d|m$ and $d|(q-1)$. There is $y$ such that $y^d=1$, so
$$y^m=(y^d)^{m/d}=1,$$
thus $y^m=1$, contradiction.

To prove the equivalence of the second and third statements, let $h$ be the order of $p$ in $(\Z/m\Z)^*$. 
Assume that $(p^s-1,m)=1$, for some $s$. For any $a\in \N$, we have
$$p^{ah+s}\equiv~p^{s}~(\mathrm{mod}~m),$$
hence 
$$p^{ah+s}-1\equiv~p^{s}-1~(\mathrm{mod}~m).$$
Therefore
$$(p^{ah+s}-1,m)=1.$$

Conversely, the last congruence shows that 
$(p^{ah+s}-1,m)=1$ implies $$(p^{s}-1,m)=1.$$
The proof is complete.\end{proof}
\begin{cor*} There are pseudo-finite fields of characteristic $2$ 
which do not contain non-cubes, and pseudo-finite fields of characteristic $3$ which do not contain non-squares. 
There are pseudo-finite fields $K$ of characteristic zero such that $K^*=(K^*)^n$ for all odd $n$.
\end{cor*}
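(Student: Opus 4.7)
The plan is to apply Proposition \ref{app} together with the classical fact that every non-principal ultraproduct of finite fields is pseudo-finite (Ax). Since the property $K^*=(K^*)^m$ is expressible by a single first-order sentence in $\cL_{\rm{ring}}$, it transfers across ultraproducts by \L o\'s's theorem. So for each statement it suffices to produce an infinite sequence of finite fields of the desired characteristic satisfying the appropriate power-closure property.

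For the characteristic $2$ statement, I would let $k_i$ range over the odd positive integers. Since $2\equiv -1\pmod 3$, for odd $k_i$ we have $2^{k_i}\equiv -1\pmod 3$, whence $(2^{k_i}-1,3)=1$, and Proposition \ref{app} gives $\F_{2^{k_i}}^{*}=(\F_{2^{k_i}}^{*})^3$. Any non-principal ultraproduct of the $\F_{2^{k_i}}$ is then a pseudo-finite field of characteristic $2$ containing no non-cubes. The characteristic $3$ statement is handled in the same spirit by selecting a sequence of powers of $3$ that satisfies the coprimality condition of Proposition \ref{app} required for every nonzero element to be a square, and taking a non-principal ultraproduct.

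For the characteristic $0$ statement I need $K^*=(K^*)^n$ simultaneously for all odd $n$, which is an infinite family of first-order conditions, so a diagonal argument is required. I would enumerate the odd primes $q_1<q_2<\ldots$ and, for each $i$, invoke Dirichlet's theorem on primes in arithmetic progressions to pick a prime $p_i$, larger than all previously chosen $p_j$, with $p_i\not\equiv 1\pmod{q_j}$ for every $j\le i$. Such primes exist in abundance because, modulo $q_1\cdots q_i$, the residues avoiding $1$ modulo each $q_j$ form a nonempty union of reduced residue classes (each $q_j\ge 3$). Letting $K=\prod_i\F_{p_i}/\mathcal{U}$ for a non-principal ultrafilter $\mathcal{U}$, the field $K$ is pseudo-finite of characteristic $0$ (as $p_i\to\infty$). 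For any fixed odd $n$, each prime divisor of $n$ is some $q_j$ in the enumeration, so for $i$ large enough we have $(p_i-1,n)=1$; Proposition \ref{app} then gives $\F_{p_i}^{*}=(\F_{p_i}^{*})^n$ for almost all $i$, and \L o\'s's theorem yields $K^{*}=(K^{*})^n$.

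The main obstacle is the characteristic $0$ case, where the simultaneous diagonalization over all odd $n$ must be realized by a single sequence of primes; this is exactly where Dirichlet's theorem is essential. The two positive characteristic statements are comparatively routine once Proposition \ref{app} is in hand.
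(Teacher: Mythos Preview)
Your argument for the characteristic~$2$ statement is correct and is essentially the paper's approach: Proposition~\ref{app} supplies infinitely many $\F_{2^k}$ with $(\F_{2^k}^*)^3=\F_{2^k}^*$, and one passes to a non-principal ultraproduct. For characteristic~$0$ your explicit diagonal construction via Dirichlet is also correct, though it differs from the paper's route. The paper instead invokes compactness: the conditions $K^*=(K^*)^n$ for odd $n$ form a type, so it suffices to realize finitely many of them at once; replacing $n_1,\dots,n_r$ by their (odd) product reduces to a single odd $n$, and then Proposition~\ref{app} applies. Your argument is more constructive and exhibits a concrete ultraproduct; the paper's is shorter but less explicit.

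The genuine gap is in the characteristic~$3$ clause. Your phrase ``in the same spirit'' cannot be filled in: the coprimality condition from Proposition~\ref{app} would require $(3^k-1,2)=1$ for some $k$, but $3^k-1$ is always even, so no such sequence of finite fields exists. In fact the characteristic~$3$ statement, as printed, is false. Any pseudo-finite field of characteristic~$3$ is elementarily equivalent to an ultraproduct of fields $\F_{3^k}$, and each of these has $[\F_{3^k}^*:(\F_{3^k}^*)^2]=2$; hence so does the ultraproduct, and therefore so does the pseudo-finite field. Equivalently, a pseudo-finite field of odd characteristic has a unique quadratic extension, which in characteristic $\neq 2$ is generated by a square root of a non-square. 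The paper's own remark immediately following the Corollary (that $k^*/(k^*)^2$ has order~$2$ for every finite $k$ of odd characteristic) confirms this. So the defect here lies in the statement, not in your method; you should flag it rather than attempt to prove it.
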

\begin{proof} The first two statements are immediate by Proposition \ref{sec-app}. For the last statement use 
compactness to reduce to the case of finitely many $n$, therefore to one $n$ by taking product, and then use 
Proposition \ref{sec-app}.
\end{proof}
Note that the restriction to odd $n$ in the Corollary is necessary since for any finite field $k$ of odd characteristic, 
$k^*/(k^*)^2$ has cardinality $2$.

\subsection*{Acknowledgement}
The idea for this paper originated through discussions with J.~Koenigsmann, J~Demeyer, and C.~Degroote, to whom we are very grateful.
We are also indebted to E. Hrushovski and Z.~Chatzidakis for invaluable help on the results of \cite{Udifinite}. 
We also thank I. Halupczok and D. R. Heath-Brown for interesting discussions, and the referee for very valuable ideas.

\end{document}